
\documentclass{article}
%\usepackage[a4paper]{geometry}

%\documentclass[leqno]{macrorend}

% leave it blanck, you dont know these infos yet
%
%\volumeyear{xxxx}\yearnumber{x}\volumenumber{xx}

%\hoffset= - 1.5 cm
\usepackage{amsmath}
\usepackage{amsfonts}
\usepackage{amssymb}
%\usepackage{graphicx}%
%\setcounter{MaxMatrixCols}{30}
%TCIDATA{OutputFilter=latex2.dll}
%TCIDATA{Version=5.50.0.2960}
%TCIDATA{CSTFile=LaTeX article (bright).cst}
%TCIDATA{Created=Thursday, May 31, 2012 14:55:20}
%TCIDATA{LastRevised=Monday, October 01, 2012 19:57:12}
%TCIDATA{<META NAME="GraphicsSave" CONTENT="32">}
%TCIDATA{<META NAME="SaveForMode" CONTENT="1">}
%TCIDATA{BibliographyScheme=BibTeX}
%TCIDATA{<META NAME="DocumentShell" CONTENT="Articles\SW\Standard LaTeX Article (Chicago)">}
%TCIDATA{Language=American English}
%BeginMSIPreambleData
\providecommand{\U}[1]{\protect\rule{.1in}{.1in}}
%EndMSIPreambleData
%\hoffset= - 0.5 cm
\newtheorem{theorem}{Theorem}

\newtheorem{corollary}[theorem]{Corollary}

\newtheorem{definition}[theorem]{Definition}

\newtheorem{lemma}[theorem]{Lemma}
\newtheorem{proposition}[theorem]{Proposition}
\newtheorem{remark}[theorem]{Remark}

\newtheorem{hypothesis}{Hypothesis}
\newcommand{\loc}{\mathrm{loc}}
\newcommand{\eps}{\varepsilon}
\newcommand{\RR}{\mathbb{R}}
\newenvironment{proof}[1][Proof]{\noindent\textbf{#1.} }{\ \rule{0.5em}{0.5em}}
\begin{document}

% select a language among: english, french, italian
%
%\selectlanguage{english}

% if you dont have footnote, cancel \footnotemark[1]
% separate several authors with ' - '
%
 \title{Remarks on the stochastic transport equation with H\"{o}lder drift}
\author{F. Flandoli$^{1}$, M. Gubinelli$^{2}$, E. Priola$^{3}$\\{\small {(1) Dipartimento di Matematica, Universit\`{a} di Pisa, Italia}}\\{\small {\ (2) CEREMADE (UMR 7534), Universit\'e Paris Dauphine, France}}\\{\small {\ (3) Dipartimento di Matematica, Universit\`a di Torino, Italia }}}
\maketitle

\begin{abstract}
We consider a stochastic linear transport equation with a globally H\"{o}lder
continuous and bounded vector field. Opposite to what happens in the
deterministic case where shocks may appear, we show that the unique solution
starting with a $C^{1}$-initial condition remains of class $C^{1}$ in space.
We also improve some results of \cite{FGP} about well-posedness. Moreover, we
prove a stability property for the solution with respect to the initial datum.

\end{abstract}

\section{Introduction\label{section 1}}

The aim of this paper is twofold. On one side, we review ideas and recent
results about the regularization by noise in ODEs and PDEs (Section
\ref{section 1}). On the other, we give detailed proof of two new results of
regularization by noise, for linear trasport equations, related to those of
the paper \cite{FGP} (Theorem \ref{thmnoblowup} and the results of section
\ref{sec:fractional}).

\subsection{The ODE case}

A well known but still always surprising fact is the regularization produced
by noise on ordinary differential equations (ODEs). Consider the ODE in
$\mathbb{R}^{d}$
\[
\frac{d}{dt}X\left(  t\right)  =b\left(  t,X\left(  t\right)  \right)  ,\quad
X\left(  0\right)  =x_{0}\in\mathbb{R}^{d}%
\]
with $b:\left[  0,T\right]
\times\mathbb{R}^{d}\rightarrow\mathbb{R}^{d}$. If $b$ is Lipschitz
continuous and has linear growth, uniformly in $t$, then there
exists a unique solution $X\in C\left(  [  0,T]
;\mathbb{R}^{d}\right)  $. But when $b$ is less regular there are
well-known counterexamples, like the case $d=1$, $b\left(  x\right)
=2sign\left( x\right)  \sqrt{\left\vert x\right\vert }$, $x_{0}=0$
where the Cauchy problem has infinitely many solutions: $X\left(
t\right)  =0$, $X\left(  t\right) =t^{2}$, $\displaystyle{ X\left(  t\right)
=-t^{2}}$, and others. The function $b$ of this example is H\"{o}lder
continuous.

Consider now the stochastic differential equation (SDE)%
\begin{equation}
\label{SDE}dX\left(  t\right)  =b\left(  t,X\left(  t\right)  \right)
dt+\sigma dW\left(  t\right)  ,\quad X\left(  0\right)  =x_{0}\in
\mathbb{R}^{d}%
\end{equation}
with $\sigma\in\mathbb{R}$ and $\left\{  W\left(  t\right)  \right\}
_{t\geq0}$ a $d$-dimensional Brownian motion on a probability space
$\left( \Omega,{\mathcal F},P\right)  $. We say that a continuous
stochastic process $X\left( t,\omega\right)  $, $t\geq0$,
$\omega\in\Omega$, adapted to the filtration $\{ {\mathcal
F}_{t}^{W}\}_{t\geq 0}$ of the Brownian motion, is a solution if it
satisfies the identity
\[
X\left(  t,\omega\right)  =x_{0}+\int_{0}^{t}b\left( s, X\left(
s,\omega \right)  \right)  ds+ \sigma W\left(  t,\omega\right)
,\quad t\geq 0,
\]
for $P$-a.e. $\omega\in\Omega$. In the Lipschitz case we have again
existence and uniqueness of solutions. But now, we have more: if
$\sigma\neq0$ and $b\in
L^{\infty}\left(  \left[  0,T\right]  \times\mathbb{R}^{d};\mathbb{R}%
^{d}\right)  $ then there is existence and uniqueness of solutions, \cite{V}.
The result is true even when $b\in L^{q}\left(  0,T;L^{p}\left(
\mathbb{R}^{d};\mathbb{R}^{d}\right)  \right)  $ with $\frac{d}{p}+\frac{2}%
{q}<1$, $p,q\geq2$ \cite{Kry-Ro} (the assumptions can be properly localized).
Recently, we have proved in \cite{FGP}
the following additional result, which
 will be used below (the function spaces are defined in
 Section 1.4).

\begin{theorem}
If $\sigma\neq0$ and $b\in L^{\infty}\left(  0,T ;C_{b}^{\alpha}\left(
\mathbb{R}^{d};\mathbb{R}^{d}\right)  \right)  $,
 $\alpha \in (0,1)$, then there exists a
stochastic flow of diffeomorphisms $\phi_{t} = \phi\left(  t,\omega\right)  $
associated to the SDE, with $D\phi\left(  t,\omega\right)  $ and $D\phi
^{-1}\left(  t,\omega\right)  $ of class $C^{\alpha^{\prime}}$ for every
$\alpha^{\prime}\in\left(  0,\alpha\right)  $.
\end{theorem}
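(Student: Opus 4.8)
The plan is to establish this flow result through the now-standard Itô–Tanaka / Zvonkin transformation, which converts the singular SDE into one with sufficiently regular coefficients so that classical flow theory applies. The central idea is to remove the bad H\"older drift by composing the solution with the solution of an associated parabolic system. Concretely, I would fix $\lambda>0$ large and consider the vector-valued backward Kolmogorov equation
\[
\partial_t U + \tfrac{\sigma^2}{2}\Delta U + b\cdot\nabla U - \lambda U = -b,\qquad U(T,\cdot)=0,
\]
solved componentwise on $[0,T]\times\mathbb{R}^d$. By Schauder-type parabolic estimates for this equation, since $b\in L^\infty(0,T;C_b^\alpha)$, one obtains a solution $U$ with $U,\ \nabla U,\ \nabla^2 U$ bounded and $\nabla^2 U$ of class $C^\alpha$ in space, uniformly in $t$; moreover, by choosing $\lambda$ large, $\|\nabla U\|_\infty$ can be made as small as $1/2$, so that the map $\psi(t,x)=x+U(t,x)$ is, for each $t$, a diffeomorphism of $\mathbb{R}^d$ whose inverse has the same spatial regularity. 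This $U$ satisfies, along the solution of the SDE, an Itô formula in which the singular drift $b$ cancels.

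The key steps, in order, are as follows. First I would record the precise parabolic estimates on $U$ (boundedness and H\"older regularity of up to second derivatives, and smallness of $\nabla U$ for large $\lambda$), and verify that $\psi(t,\cdot)$ is a $C^{1,\alpha'}$ diffeomorphism for every $\alpha'\in(0,\alpha)$ with inverse of the same class. Second, applying Itô's formula to $Y(t):=\psi(t,X(t))$ and using the PDE, the process $Y$ solves an SDE
\[
dY(t)=\tilde b(t,Y(t))\,dt+\tilde\sigma(t,Y(t))\,dW(t),
\]
whose new coefficients $\tilde b(t,y)=\lambda U(t,\psi^{-1}(t,y))$ and $\tilde\sigma(t,y)=\sigma\bigl(I+\nabla U\bigr)(t,\psi^{-1}(t,y))$ are now \emph{Lipschitz} in space (indeed $C^{\alpha'}$ in their derivatives), because $U$ carries two spatial derivatives. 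Third, for such regular SDEs classical Kunita-type theory yields a stochastic flow of diffeomorphisms $\eta_t$ for $Y$ with $D\eta_t$ of class $C^{\alpha'}$. Finally I would transport this back, defining $\phi_t(x):=\psi^{-1}(t,\eta_t(\psi(0,x)))$, and check that $\phi_t$ is the flow of the original SDE and inherits the regularity: since $\psi$ and $\psi^{-1}$ are $C^{1,\alpha'}$ and $\eta_t$ is a $C^{1,\alpha'}$ flow, the composition $\phi_t$ and its inverse have $D\phi_t,\ D\phi_t^{-1}\in C^{\alpha'}$.

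The main obstacle I expect is the sharp regularity bookkeeping in the transformation, rather than the existence of the flow per se. One must ensure that the regularity gained from the PDE ($b\in C_b^\alpha \Rightarrow \nabla^2 U\in C_b^\alpha$) is exactly enough to make $\tilde b,\tilde\sigma$ have Lipschitz-with-H\"older-derivative spatial regularity, and then that composing the three maps $\psi^{-1}$, $\eta_t$, $\psi$ does not degrade the H\"older exponent below any $\alpha'<\alpha$. The delicate points are: (i) justifying Itô's formula when $U$ has only $C^{1,\alpha+1}$-type spatial regularity and the drift is merely H\"older (this is where one needs $U\in C^2$ in space, which the Schauder estimate provides); (ii) controlling the spatial derivatives of $\eta_t$ and $\eta_t^{-1}$ uniformly so that the chain rule gives $C^{\alpha'}$ control on $D\phi_t^{\pm1}$; and (iii) passing from the flow property of $Y$ to that of $X$ while preserving measurability and adaptedness. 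None of these is conceptually hard given the parabolic estimates, but the quantitative matching of H\"older exponents through the composition is the step requiring the most care.
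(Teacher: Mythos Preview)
Your proposal is correct and follows essentially the same Zvonkin/It\^o--Tanaka strategy that the paper sketches (the full proof being in \cite{FGP}): introduce the auxiliary backward parabolic system for $U$, use its Schauder regularity and the smallness of $\nabla U$ for large $\lambda$, and rewrite the SDE so that all coefficients become Lipschitz, from which the flow regularity follows. The only cosmetic difference is that you package the transformation as a conjugation by the diffeomorphism $\psi=\mathrm{id}+U$ and then invoke Kunita's flow theory for the regular SDE in $Y=\psi(t,X)$, whereas the paper's sketch works directly with the rewritten equation for $X$ and the formal equation for $D_hX$; these are two presentations of the same argument.
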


By stochastic flow of diffeomorphisms we mean a family of maps $\phi\left(
t,\omega\right)  :\mathbb{R}^{d}\rightarrow\mathbb{R}^{d}$ such that:
\begin{itemize}
\item[i)] $\phi\left(  t,\omega\right)  \left(  x_{0}\right)  $ is the
 unique
solution of the SDE for every $x_{0}\in\mathbb{R}^{d}$;
\item[ii)]\ $\phi\left(  t,\omega\right)  $ is a diffeomorphisms of $\mathbb{R}^{d}$.
\end{itemize}
For several results on stochastic flows under more regular
conditions on $b$ see \cite{K}. Let us give an idea of the proof
assuming $\sigma =1$. Introduce the vector valued non homogeneous
backward parabolic equation%
\begin{align*}
\frac{\partial U}{\partial t}+b\cdot\nabla U+\frac{1}{2}\Delta U  &
=-b+\lambda U\qquad\text{on }\left[  0,T\right] \\
U\left(  T,x\right)   &  =0
\end{align*}
with $\lambda\geq0$. By parabolic regularity theory we have the
following result (cf.  Theorem 2 in \cite{FGP}):
\begin{theorem}
\label{theorem SDE flux}If $b\in L^{\infty}\left(  0,T
;C_{b}^{\alpha}\left( \mathbb{R}^{d};\mathbb{R}^{d}\right)  \right)
$, $\alpha \in (0,1)$, then there exists a unique bounded and
locally Lipschitz
 solution $U$ with the property
\[
\frac{\partial U}{\partial t} \in L^{\infty}\left( 0,T
;C_{b}^{\alpha}( \mathbb{R}^{d}; \mathbb{R}^{d}) \right),\quad
D^{2}U\in L^{\infty}\left( 0,T ;C_{b}^{\alpha}\left( \mathbb{R}^{d};
\mathbb{R}^{d} \otimes \mathbb{R}^{d} \otimes \mathbb{R}^{d} \right)
\right).
\]
Moreover, for large $\lambda$ one has, for any $(t,x) \in [0,T]
\times \mathbb R^d$,
\[
\left\vert \nabla U\left(  t,x\right)  \right\vert \leq\frac{1}{2}.
\]
\end{theorem}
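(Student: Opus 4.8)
The plan is to reverse time and solve the resulting forward Cauchy problem by a semigroup fixed-point argument, then upgrade the regularity by parabolic Schauder estimates. Setting $V(t,x) = U(T-t,x)$ and $\tilde b(t,x) = b(T-t,x)$, the equation becomes the forward problem
\[
\partial_t V = \tfrac12 \Delta V + \tilde b \cdot \nabla V + \tilde b - \lambda V, \qquad V(0,\cdot) = 0,
\]
and I note $\nabla U(t,\cdot) = \nabla V(T-t,\cdot)$, so the final gradient bound is unchanged by this change of variable. Let $P_t = e^{t\Delta/2}$ denote the heat semigroup; treating the drift as an inhomogeneity, I rewrite the problem in mild form
\[
V(t) = \int_0^t e^{-\lambda(t-s)} P_{t-s}\big(\tilde b(s)\cdot\nabla V(s) + \tilde b(s)\big)\, ds =: (\mathcal T V)(t).
\]

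First I would establish existence, uniqueness and the bound $\nabla V \in L^\infty(0,T;C_b^\alpha)$ by showing $\mathcal T$ is a contraction on $L^\infty(0,T;C_b^{1+\alpha})$. The key ingredient is the smoothing estimate $\|P_\tau f\|_{C^{1+\alpha}} \le C \tau^{-1/2}\|f\|_{C^\alpha}$ together with $\int_0^\infty e^{-\lambda\tau}\tau^{-1/2}\,d\tau = \sqrt{\pi/\lambda}$. Since $\tilde b \in C_b^\alpha$ and $\nabla V \in C_b^{1+\alpha}\subset C_b^\alpha$, the inhomogeneity $g(s) := \tilde b(s)\cdot\nabla V(s)+\tilde b(s)$ satisfies $\|g(s)\|_{C^\alpha} \le C\|b\|_{C^\alpha}(\|\nabla V(s)\|_{C^\alpha}+1)$, so that $\|(\mathcal T V)(t)\|_{C^{1+\alpha}} \le C\|b\|_{C^\alpha}\lambda^{-1/2}(\sup_s\|\nabla V(s)\|_{C^\alpha}+1)$; the analogous estimate on $\mathcal T V_1 - \mathcal T V_2$, whose inhomogeneity is $\tilde b\cdot\nabla(V_1-V_2)$, makes $\mathcal T$ a contraction once $\lambda$ is large, giving existence and uniqueness. (For arbitrary $\lambda\ge0$ one localizes in time, where $\int_0^\delta \tau^{-1/2}\,d\tau = 2\sqrt\delta$ gives contraction on short intervals independently of $\lambda$, and patches.) Boundedness of $V$ itself follows from $\|V(t)\|_\infty \le \lambda^{-1}\sup_s\|g(s)\|_\infty$.

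Next I would upgrade to $D^2 V, \partial_t V \in L^\infty(0,T;C_b^\alpha)$, and this is the main obstacle: differentiating the mild formula twice produces $\|D^2 P_\tau\|_{C^\alpha\to C^\alpha}\sim \tau^{-1}$, which is not integrable at $\tau=0$, so one cannot simply pull two derivatives under the integral. The classical parabolic Schauder device resolves this. For the $C^\alpha$-seminorm of $D^2 V(t)$ at points $x,y$ with $r=|x-y|$, I split the time integral at $s^\ast = t-r^2$: on $(s^\ast,t)$ I use $\|D^2 P_\tau f\|_\infty \le C\tau^{-1+\alpha/2}[f]_{C^\alpha}$ (exploiting that $D^2 P_\tau$ annihilates constants), while on $(0,s^\ast)$ I use the mean-value bound with $\|D^3 P_\tau f\|_\infty \le C\tau^{-3/2+\alpha/2}[f]_{C^\alpha}$. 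Both contributions integrate to $C\, r^\alpha \sup_s[g(s)]_{C^\alpha}$, yielding a uniform-in-$t$ bound on $[D^2 V(t)]_{C^\alpha}$; the same two estimates give boundedness of $D^2 V$. The regularity of $\partial_t V$ then follows from the equation itself, since $\tfrac12\Delta V$, $\tilde b\cdot\nabla V$, $\tilde b$ and $\lambda V$ all lie in $L^\infty(0,T;C_b^\alpha)$.

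Finally, for the gradient bound, the sup-norm version of the above estimate reads $\sup_t\|\nabla V(t)\|_\infty \le C\|b\|_\infty\lambda^{-1/2}(\sup_t\|\nabla V(t)\|_\infty+1)$, whence $\sup_t\|\nabla V(t)\|_\infty \le C\|b\|_\infty\lambda^{-1/2}/(1-C\|b\|_\infty\lambda^{-1/2})\to 0$ as $\lambda\to\infty$. In particular $\|\nabla V\|_\infty\le\tfrac12$, and hence $|\nabla U(t,x)|\le\tfrac12$, once $\lambda$ is taken large enough. I expect the genuinely delicate point to be the Schauder step of the third paragraph, where the borderline non-integrable singularity $\tau^{-1}$ is tamed by the splitting; everything else is a direct application of the semigroup smoothing estimates.
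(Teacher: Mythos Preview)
The paper does not actually prove this theorem: it simply cites it as ``Theorem 2 in \cite{FGP}'' and uses it as a black box in the heuristic discussion of the SDE flow. So there is no ``paper's own proof'' to compare against here.

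Your argument is the standard mild-formulation approach to parabolic Schauder theory and is essentially correct. A couple of small remarks. First, a typo: you write ``$\nabla V\in C_b^{1+\alpha}\subset C_b^\alpha$'', but of course what you mean (and use) is that $V\in C_b^{1+\alpha}$ implies $\nabla V\in C_b^\alpha$. Second, the uniqueness statement in the theorem is for ``bounded, locally Lipschitz'' solutions with the displayed regularity; your contraction argument gives uniqueness only among mild solutions, so you should add the (routine) observation that any solution with $\partial_t V,\Delta V\in L^\infty(0,T;C_b^\alpha)$ satisfies the Duhamel identity and is therefore mild. Neither point affects the substance of your proof. The Schauder splitting at $s^\ast=t-r^2$ is exactly the right device for taming the $\tau^{-1}$ singularity, and your estimates there are correct.
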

 \noindent If $X\left(  t\right)  $ is a solution of the SDE, we apply It\^{o}
formula to
$U\left(  t,X\left(  t\right)  \right)  $ and get%
\[
U\left(  t,X\left(  t\right)  \right)  =U\left(  0,x_{0}\right)  +\int_{0}%
^{t}\mathcal{L}U\left(  s,X\left(  s\right)  \right)  ds+\int_{0}^{t}\nabla
U\left(  s,X\left(  s\right)  \right)  dW\left(  s\right)
\]
where $\mathcal{L}U=\frac{\partial U}{\partial t}+b\cdot\nabla U+\frac{1}%
{2}\Delta U$. Hence, being $\mathcal{L}U=-b+\lambda U$,
\[
U\left(  t,X\left(  t\right)  \right)  =U\left(  0,x_{0}\right)  +\int_{0}%
^{t}\left(  -b+\lambda U\right)  \left(  s,X\left(  s\right)  \right)
ds+\int_{0}^{t}\nabla U\left(  s,X\left(  s\right)  \right)  dW\left(
s\right)
\]
and thus%
\begin{align*}
\int_{0}^{t}b\left(s,  X\left(  s\right)  \right)  ds  &  =U\left(
0,x_{0}\right)  -U\left(  t,X\left(  t\right)  \right)
+\int_{0}^{t}\lambda
U\left(  s,X\left(  s\right)  \right)  ds\\
&  +\int_{0}^{t}\nabla U\left(  s,X\left(  s\right)  \right)  dW\left(
s\right)  .
\end{align*}
In other words, we may rewrite the SDE as%
\begin{align*}
X\left(  t\right)   &  =x_{0}+U\left(  0,x_{0}\right)  -U\left(  t,X\left(
t\right)  \right)  +\int_{0}^{t}\lambda U\left(  s,X\left(  s\right)  \right)
ds\\
&  +\int_{0}^{t}\nabla U\left(  s,X\left(  s\right)  \right)  dW\left(
s\right)  +W\left(  t\right)  .
\end{align*}
The advantage is that $U$ is twice more regular than $b$ and $\nabla U$ is
once more regular. All terms in this equation are at least Lipschitz continuous.

From the new equation satisfied by $X\left(  t\right)  $ it is easy to prove
uniqueness, for instance. But, arguing a little bit formally,
 it is also clear
that we have differentiability of $X\left(  t\right)  $ with respect to the
initial condition $x_{0}$. Indeed, if $D_{h}X\left(  t\right)  $ denotes the
derivative in the direction $h$, we (formally)\ have%
\begin{align*}
D_{h}X\left(  t\right)   &  =h+D_{h}U\left(  0,x_{0}\right) - \nabla
U\left(
t,X\left(  t\right)  \right)  D_{h}X\left(  t\right) \\
&  +\int_{0}^{t}\lambda \nabla U\left(  s,X\left(  s\right)  \right)
 D_{h}X\left(
s\right)  ds\\
&  +\int_{0}^{t}D^{2}U\left(  s,X\left(  s\right)  \right)  D_{h}X\left(
s\right)  dW\left(  s\right)  .
\end{align*}
All terms are meaningful (for instance the tensor valued coefficient
$D^{2}U\left(  s,X\left(  s\right)  \right)  $ is bounded
continuous), $\nabla U\left(  t,X\left(  t\right)  \right)  $ has
norm less than 1/2 (hence the term  $\nabla U\left(  t,X\left(
t\right) \right) D_{h}X\left(  t\right)  $ contracts) and one can
prove that this equation has a solution $D_{h}X\left( s\right)  $.
Along these lines one can build a rigorous proof of
differentiability. We do not discuss the other properties.

%\vskip 2mm

\begin{remark}
\emph{A main open problem is the case when $b$ is random:%
 $
 b=b\left(  \omega,t,x\right)
 $.
In this case, strong uniqueness statements of the previous form are unknown
(when $b$ is not regular). }
\end{remark}

\subsection{The PDE case}

We have seen that noise improves the theory of ODEs. Is it the same for PDEs?
We have several more possibilities, several dichotomies:%
\[%
\begin{array}
[c]{ccc}
&  & \text{linear}\\
& \nearrow & \\
\text{equations:} &  & \\
& \searrow & \\
&  & \text{non linear}%
\end{array}
\]%
\[%
\begin{array}
[c]{ccc}
&  & \text{uniqueness (weak solutions)}\\
& \nearrow & \\
\text{problems:} &  & \\
& \searrow & \\
&  & \text{blow-up (regular solutions)}%
\end{array}
\]%
\[%
\begin{array}
[c]{ccc}
&  & \text{additive (like for ODEs)}\\
& \nearrow & \\
\text{noise:} &  & \\
& \searrow & \\
&  & \text{bilinear multiplicative.}%
\end{array}
\]
Let us deal with two of the simplest but not trivial combinations:
\textit{linear} transport equations, both the problem of
\textit{uniqueness}
of weak $L^{\infty}$ solutions and of \textit{no blow-up} of $C^{1}%
$-solutions, the improvements of the deterministic theory produced by a
\textit{bilinear multiplicative} noise.

The linear deterministic transport equation is the first order PDE in
$\mathbb{R}^{d}$%
\[
\frac{\partial u}{\partial t}+b\cdot\nabla u=0,\qquad u|_{t=0}=u_{0}%
\]
where $b:\left[  0,T\right]  \times\mathbb{R}^{d}\rightarrow\mathbb{R}^{d}$ is
given and we look for a solution $u:\left[  0,T\right]  \times\mathbb{R}%
^{d}\rightarrow\mathbb{R}$.

\begin{definition}
Assume $b,\operatorname{div}b\in L_{loc}^{1}
 = L_{loc}^{1} ([0,T] \times {\mathbb {R}}^d) $,
 $u_{0}\in L^{\infty}\left(
\mathbb{R}^{d}\right)  $. We say that $u$ is a weak
$L^{\infty}$-solution if:
\begin{itemize}
\item[i)] $u\in L^{\infty}\left(  \left[  0,T\right]  \times\mathbb{R}^{d}\right)  $
\item[ii)] for all $\theta\in C_{0}^{\infty}\left(  \mathbb{R}^{d}\right)  $ one has%
\[
\int_{{\mathbb R}^d}u\left(  t,x\right)  \theta\left(  x\right)
dx=\int_{{\mathbb R}^d}u_{0}\left( x\right)  \theta\left(  x\right)
dx+\int_{0}^{t}\int_{{\mathbb R}^d}u\left(  s,x\right)
\operatorname{div}\left( b\left(s,  x\right)  \theta\left(  x\right)
\right) dxds
\]
\end{itemize}
\end{definition}
Existence of weak $L^{\infty}$-solutions is a general fact, obtained
by weak-star compactness methods. When $b\in L^{\infty}\left(  0,T ;
Lip_b\left(  \mathbb{R}^{d};\mathbb{R}^{d}\right)  \right)  $,
uniqueness can be proved, and also existence of smoother solutions
when $u_{0}$ is smoother. Moreover, one has the transport relation
\[
u\left(  t,\phi\left(  t,x\right)  \right)  =u_{0}\left(  x\right)
\]
where $\phi\left(  t,x\right)  $ is the deterministic flow associated to the
equation of characteristics%
\[
\frac{d}{dt}\phi\left(  t,x\right)  =
 b\left(  \phi\left(  t,x\right)  \right)
,\quad\phi\left(  0,x\right)  =x.
\]
When $b$
is less than Lipschitz continuous, there are counterexamples. For
instance, for
\[
d=1,\quad b\left(  x\right)  =2sign\left(  x\right)  \sqrt{\left\vert
x\right\vert }%
\]
the PDE has infinitely many solutions from any initial condition $u_{0}$.
These solutions coincide for $\left\vert x\right\vert >t^{2}$, where the flow
is uniquely defined, but they can be prolonged almost arbitrarily for
$\left\vert x\right\vert <t^{2}$, for instance setting%
\[
u\left(  t,x\right)  =C\text{ for }\left\vert x\right\vert <t^{2}%
\]
with arbitrary $C$.
Remarkable is the result of \cite{DiPernaLions} which states that the solution
is unique when (we do not stress the generality of the behavior at infinity)%
\begin{equation}
\nabla b\in L_{loc}^{1}\left(  \left[  0,T\right]  \times\mathbb{R}^{d}%
;\mathbb{R}^{d}\right),  \label{weak diff}
\end{equation}%
\begin{equation}
\operatorname{div}b\in L^{1}\left(  0,T;L^{\infty}\left(  \mathbb{R}%
^{d},\mathbb{R}^{d}\right)  \right)  .\label{no compression}%
\end{equation}
There are generalizations of this result (for instance \cite{Ambrosio}), but
not so far from it. In these cases the flow exists and is unique but only in a
proper generalized sense. The assumption (\ref{no compression}) is the
quantitative one used to prove the estimate (for simplicity we omit the cut-of
needed to localize)%
\[
\int_{{\mathbb R}^d}u^{2}\left(  t,x\right)  dx= \int_{{\mathbb
R}^d}u_{0}^{2}\left(  x\right)  dx+\int _{0}^{t}ds \int_{{\mathbb
R}^d}u^{2}\left(  s,x\right) \operatorname{div}b\left(s,  x\right)
dx
\]%
\[
\leq\int_{{\mathbb R}^d}u_{0}^{2}\left(  x\right)
dx+\int_{0}^{t}\left\Vert \operatorname{div}b\left(  s, \cdot
\right) \right\Vert _{\infty}ds \int_{{\mathbb R}^d}u^{2}\left(
s,x\right)  dx
\]
which implies, by Gronwall lemma, $\int_{{\mathbb R}^d}u^{2}\left(
t,x\right)  dx=0$ when $u_{0}=0$ (this implies uniqueness, since the
equation is linear). The assumption (\ref{weak diff}) apparently has
no role but it is essential to perform these computations
rigorously. One has to prove that a weak $L^{\infty}$-solution $u$
satisfies the previous identity. In order to apply differential
calculus to $u$, one can mollify $u$ but then a remainder, a
commutator, appears in the equation. The convergence to zero of this
commutator (established by the so called \textit{commutator lemma}
of \cite{DiPernaLions}) requires assumption (\ref{weak diff}). We
have recalled these facts since they are a main motiv below.

The problem of no blow-up of $C^{1}$ or $W^{1,p}$ solution is open for the
deterministic equation, under essentially weaker conditions than Lipschitz
continuity of $b$. The equation satisfied by first derivatives $v_{k}%
=\frac{\partial u}{\partial x_{k}}$ involves derivatives of $b$ as a potential
term%
\[
\frac{\partial v_{k}}{\partial t}+b\cdot\nabla v_{k}+\sum_{i}\frac{\partial
b}{\partial x_{i}}v_{i}=0,\qquad v_{k}|_{t=0}=\frac{\partial u_{0}}{\partial
x_{k}}%
\]
and $L^{\infty}$ bounds on $\frac{\partial b}{\partial x_{i}}$ seem necessary
to control $v_{k}$. Again there are simple counterexamples: in the case
\[
d=1,\quad b\left(  x\right)  =-2sign\left(  x\right)  \sqrt{\left\vert
x\right\vert },%
\]
the equation of characteristics has coalescing trajectories (the
solutions from $\pm x_{0}$ meet at $x=0$ at time $\sqrt{\left\vert
x_{0}\right\vert }$) and thus, if we start with a smooth initial
condition $u_{0}$ such that at some point $x_{0}$ satisfies
$u_{0}\left(  x_{0}\right)  \neq u_{0}\left( -x_{0}\right)  $, then
at time $t_{0}=\sqrt{\left\vert x_{0}\right\vert }$ the solution is
discontinuous (unless $u_{0}$ is special, the discontinuity appears
immediately, for $t>0$).

Consider the following stochastic version of the linear transport equation:%
\[
\frac{\partial u}{\partial t}+b\cdot\nabla u+\sigma\nabla u\circ\frac{dW}%
{dt}=0,\qquad u|_{t=0}=u_{0}.
\]
The noise $W$ is a $d$-dimensional Brownian motion, $\sigma\in\mathbb{R}$, the
operation $\nabla u\circ\frac{dW}{dt}$ has simultaneously two features:\ it is
a scalar product between the vectors $\nabla u$ and $\frac{dW}{dt}$, and has
to be interpreted in the Stratonovich sense. The noise has a transport
structure as the deterministic part of the equation. It is like to add the
fast oscillating term $\sigma\frac{dW}{dt}$ to the drift $b$:%
\[
b\left(  x\right)  \longrightarrow b\left(  x\right)  +\sigma\frac{dW}%
{dt}\left(  t\right)  .
\]

Concerning Stratonovich calculus and its relation with It\^{o} calculus, see
\cite{K}. We recall the so called Wong-Zakai principle (proved as a rigorous
theorem in several cases): when one takes a differential equations with a
smooth approximation of Brownian motion, and then takes the limit towards true
Brownian motion, the correct limit equation involves Stratonovich integrals.
Thus equations with Stratonovich integrals are more physically based.

\begin{definition}
Assume $b,\operatorname{div}b\in L_{loc}^{1}$, $u_{0}\in
L^{\infty}\left( \mathbb{R}^{d}\right)  $. We say that a stochastic
process $u$ is a weak $L^{\infty}$-solution of the SPDE if:
\begin{itemize}
\item[i)] $u\in L^{\infty}\left(  \Omega\times\left[  0,T\right]  \times
\mathbb{R}^{d}\right)  $
\item[ii)] for all $\theta\in C_{0}^{\infty}\left(  \mathbb{R}^{d}\right)  $, $\int_{{\mathbb R}^d}u\left(  t,x\right)  \theta\left(  x\right)  dx$ is a continuous adapted semimartingale
\item[iii)] for all $\theta\in C_{0}^{\infty}\left(  \mathbb{R}^{d}\right)  $, one
has%
\begin{align*}
\int_{{\mathbb R}^d}u\left(  t,x\right)  \theta\left(  x\right)  dx
&  =\int_{{\mathbb R}^d}u_{0}\left( x\right)  \theta\left(  x\right)
dx+\int_{0}^{t}\int_{{\mathbb R}^d}u\left(  s,x\right)
\operatorname{div}\left( b\left(s,  x\right)  \theta\left(  x\right)
\right)
dxds\\
&  +\sigma\int_{0}^{t}\left(  \int_{{\mathbb R}^d}u\left(
s,x\right)  \nabla\theta\left( x\right)  dx\right)  \circ dW\left(
s\right)  .
\end{align*}
\end{itemize}
\end{definition}
The following theorem is due to \cite{FGP}.
\begin{theorem} \label{fgpp}
If $\sigma\neq0$ and%
 \begin{align} \label{c34}
b\in L^{\infty}\left(  0,T ;C_{b}^{\alpha}\left(  \mathbb{R}^{d}%
;\mathbb{R}^{d}\right)   \right)  ,\quad\operatorname{div}b\in L^{p}
([0,T] \times \mathbb{R}^d),
\end{align}
for some $\alpha\in\left(  0,1\right)  $ and $p>d\wedge2$, then
there exists a unique weak $L^{\infty}$-solution of the SPDE. If
$\alpha\in(1/2,1)$ then we have uniqueness only assuming
$\operatorname{div} b \in L^{1}_{loc}$.
Moreover, it holds%
\[
u\left(  t,\phi\left(  t,x\right)  \right)  =u_{0}\left(  x\right)
\]
where $\phi\left(  t,x\right)  $ is the stochastic flow of diffeomorphisms
associated to the equation%
\[
d\phi\left(  t,x\right)  =b\left( t, \phi\left(  t,x\right)  \right)
dt+\sigma dW\left(  t\right)  ,\quad\phi\left(  0,x\right)  =x
\]
given by Theorem \ref{theorem SDE flux}.
\end{theorem}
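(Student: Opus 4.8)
The plan is to treat existence and uniqueness separately, both built on the stochastic flow $\phi(t,x)$ of the characteristic SDE $d\phi = b(t,\phi)\,dt + \sigma\,dW$ supplied by Theorem \ref{theorem SDE flux}. For existence I would exhibit the candidate $u(t,x)=u_{0}(\phi_{t}^{-1}(x))$ and verify directly that it satisfies the weak formulation; uniqueness I would obtain by a duality argument, pairing an arbitrary $L^{\infty}$-solution against a test function transported by the \emph{same} flow, so that the transport and the Stratonovich terms cancel in the pairing. The transport relation $u(t,\phi(t,x))=u_{0}(x)$ then emerges as a by-product of the same computation run with a general initial datum.

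For existence, since $u_{0}\in L^{\infty}$ and $\phi_{t}^{-1}(\cdot,\omega)$ is a diffeomorphism, the representation $u(t,x)=u_{0}(\phi_{t}^{-1}(x))$ lies in $L^{\infty}(\Omega\times[0,T]\times\mathbb{R}^{d})$ and is adapted. To check condition (iii) I would write, for $\theta\in C_{0}^{\infty}$, the pairing $\int u(t,x)\theta(x)\,dx=\int u_{0}(y)\,\theta(\phi_{t}(y))\,J_{t}(y)\,dy$ after the change of variables $x=\phi_{t}(y)$, where $J_{t}(y)=\det D\phi_{t}(y)$. Applying the Stratonovich chain rule to $\theta(\phi_{t}(y))$ through $d\phi_{t}=b\,dt+\sigma\,dW$, together with the equation $dJ_{t}=J_{t}\,(\operatorname{div}b)(t,\phi_{t})\,dt$ for the Jacobian, and then integrating in $y$ and changing variables back, should reproduce exactly the semimartingale identity of the definition. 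It is here that $\operatorname{div}b\in L^{p}$ enters: it is needed to give meaning to the Jacobian equation along trajectories and to guarantee that $J_{t}$ is a genuine positive, integrable density.

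For uniqueness, by linearity it suffices to show that $u_{0}=0$ forces $u\equiv 0$. Fix $T>0$ and $\theta\in C_{0}^{\infty}$, and let $g(t,x)$ solve the backward dual continuity equation $\partial_{t}g+\operatorname{div}(b\,g)+\sigma\nabla g\circ\dot W=0$ with terminal datum $g(T)=\theta$, represented through the push-forward of $\theta$ by the flow from time $t$ to time $T$. The decisive point is that, because the noise is additive and of transport type, the Stratonovich product rule carries no It\^o correction, so $t\mapsto\int u(t,x)g(t,x)\,dx$ has differential $\int[(du)\,g+u\,(dg)]\,dx$. Inserting the weak equation for $u$ and the equation for $g$, the drift terms combine into $-\operatorname{div}(b\,ug)$ and the two Stratonovich terms into $-\sigma\nabla(ug)\circ\dot W$, both of which integrate to zero over $\mathbb{R}^{d}$. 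Hence the pairing is constant in time, giving $\int u(T)\theta\,dx=\int u_{0}\,g(0)\,dx=0$; since $\theta$ and $T$ are arbitrary, $u\equiv 0$, and the same pairing with $u_{0}\neq 0$ identifies every solution with $u_{0}(\phi_{t}^{-1}(\cdot))$.

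The hard part is not the formal cancellation but its justification for merely $L^{\infty}$ solutions against a non-smooth dual object $g$. Since $b$ is only H\"older, $g$ inherits only the regularity of the flow, namely the $C^{\alpha'}$ regularity of the derivatives $D\phi$, $D\phi^{-1}$ that accompanies its construction, so $g$ is at best $C^{1}$ in space and $u$ cannot be differentiated naively. I would therefore regularize, mollifying $u$ (or $g$), and control the resulting commutator $[\,b\cdot\nabla,\,\rho_{\eps}\ast\,]u$; in the deterministic setting this commutator is precisely what fails to vanish for H\"older drift, and the role of the noise is to replace $b$ by the effectively Lipschitz data coming from the It\^o--Tanaka transform $U$ of Theorem \ref{theorem SDE flux}, so that the classical commutator estimate closes. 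This is also where the dichotomy in the hypotheses shows up: for $\alpha\in(1/2,1)$ the flow and its Jacobian are regular enough that $\operatorname{div}b\in L^{1}_{\loc}$ suffices to make the change of variables and the pairing rigorous, whereas for the full range $\alpha\in(0,1)$ one needs the quantitative control $\operatorname{div}b\in L^{p}$, $p>d\wedge 2$, to dominate both the Jacobian and the commutator remainder. I expect the vanishing of this commutator, equivalently the legitimacy of the Stratonovich pairing for non-smooth solutions, to be the main technical obstacle.
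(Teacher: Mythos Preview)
The paper does not give a proof of this theorem; it is stated as a result of \cite{FGP} and cited from there. What the present paper does prove are the related Theorem~\ref{thmnoblowup} (existence of $C^1$ solutions) and the new uniqueness results of Section~\ref{sec:fractional}, and from these one can infer the structure of the argument in \cite{FGP}.

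Your existence sketch is essentially the computation carried out in Section~\ref{section no blow-up}, but note that the paper does not apply the Stratonovich chain rule directly to $\theta(\phi_t(y))$ with the non-smooth $b$: it regularizes $b$ to $b^\varepsilon$, writes the It\^o formula for the smooth flow $\phi^\varepsilon$, integrates by parts to eliminate $\operatorname{div}b^\varepsilon$, and only then passes to the limit using the stability estimates of Theorem~\ref{th:flow1}. Your direct argument with $dJ_t=J_t(\operatorname{div}b)(t,\phi_t)\,dt$ is formal for H\"older $b$.

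Your uniqueness strategy differs from the one in \cite{FGP}. You propose a duality pairing against a backward dual $g$ solving a continuity equation with terminal datum; this runs into the adaptedness mismatch between the forward semimartingale $\int u\,\theta$ and a backward object, and the Stratonovich product rule you invoke for $\int u\,g\,dx$ is not available at that regularity. The approach actually used is more direct: one shows $u(t,\phi_t(x))=u_0(x)$ by testing the weak equation against $\theta(\phi_t^{-1}(\cdot))\,J\phi_t^{-1}(\cdot)$, i.e.\ by the change of variables $x=\phi_t(y)$ in the weak formulation itself. After mollifying $u$, the remainder is the commutator composed with the flow, $\int \mathcal{R}_\varepsilon[u,b](\phi_t(x))\rho(x)\,dx$, exactly the object bounded in Lemma~\ref{dim12}. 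The mechanism that makes this commutator vanish is \emph{not} that $b$ becomes Lipschitz via the It\^o--Tanaka transform $U$; it is that the Jacobian $J\phi_t^{-1}$ has enough fractional regularity (H\"older or $W^{1-\alpha,p}$, depending on the hypotheses) to compensate the mere $C^\alpha$ regularity of $b$ in the commutator estimate. The role of $U$ is upstream, in producing the differentiable flow; the commutator is handled by trading regularity between $b$ and the test function $\rho_\phi=\rho\circ\phi^{-1}\cdot J\phi^{-1}$, which is where the dichotomy on $\operatorname{div}b$ and the threshold $\alpha>1/2$ genuinely enter (Corollary~\ref{ci4} and Theorem~\ref{iac11}).
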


Thus we see that a suitable noise improves the theory of linear transport
equation from the view-point of uniqueness of weak solutions. One of the aims
of this paper is to prove a variant of this theorem, under different
assumptions on $b$. It requires a new form of commutator lemma with respect to
those proved in \cite{DiPernaLions} or \cite{FGP}.

Let us come to the blow-up problem.
%As a consequence of the results of
%\cite{FGP}, we prove below in Section \ref{section no %blow-up} the following result:
The following result can be deduced from  \cite[Appendix A]{FGP} in which we
have considered $BV_{loc}$-solutions  for the transport equation. In Section 2
we will give a direct proof of  the existence part which is of independent interest.

\begin{theorem}
\label{thmnoblowup}If $\sigma\neq0$,%
\[
b\in L^{\infty}(  0,T ;C_{b}^{\alpha}(  \mathbb{R}^{d}%
;\mathbb{R}^{d})  ) ,
\]
for some $\alpha\in\left(  0,1\right)  $ and $u_{0}\in C^{1}_{b}\left(
\mathbb{R}^{d}\right)  $, then there exists a unique classical $C^{1}%
$-solution for the transport equation with probability one. It is given by
\begin{align}
\label{df1}u\left(  t,x\right)  =u_{0}\left(  \phi_{t}^{-1}\left(  x\right)
\right)
\end{align}
where $\phi_{t}^{-1}$ is the inverse of the stochastic flow $\phi_{t}%
=\phi\left(  t,\cdot\right)  $.
\end{theorem}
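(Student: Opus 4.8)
The plan is to prove Theorem \ref{thmnoblowup} by exploiting the stochastic flow $\phi_t$ from Theorem \ref{theorem SDE flux} as a change of variables that converts the transport SPDE into a trivial equation along characteristics. The key structural fact is that the Stratonovich transport noise has exactly the same geometric form as the drift, so the candidate solution should be constant along the stochastic characteristics, giving the explicit formula \eqref{df1}.

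First I would establish the regularity of the candidate. By Theorem \ref{thmnoblowup}'s hypotheses together with Theorem~1 (the flow theorem), for almost every $\omega$ the inverse flow $\phi_t^{-1}$ is a $C^1$ diffeomorphism of $\mathbb{R}^d$ with $D\phi_t^{-1}$ of class $C^{\alpha'}$ for every $\alpha' \in (0,\alpha)$. Since $u_0 \in C^1_b(\mathbb{R}^d)$, the composition $u(t,x) = u_0(\phi_t^{-1}(x))$ is then $C^1$ in $x$ with a spatial derivative
\[
\nabla u(t,x) = (D\phi_t^{-1}(x))^{\ast}\, \nabla u_0(\phi_t^{-1}(x)),
\]
which is bounded and continuous, so no blow-up of the $C^1$ norm occurs for any finite $t$. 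This is the conceptual heart of the statement: the noise prevents the coalescence of characteristics that destroyed $C^1$ regularity in the deterministic counterexample with $b(x) = -2\,\mathrm{sign}(x)\sqrt{|x|}$.

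Next I would verify that \eqref{df1} actually solves the SPDE in the sense of the weak $L^\infty$-solution definition, and invoke Theorem \ref{fgpp} for uniqueness. For existence, I would test against $\theta \in C_0^\infty$ and apply the change of variables $y = \phi_t(x)$ (i.e. $x = \phi_t^{-1}(y)$), reducing the spatial integral to one involving $u_0$ and the pulled-back test function; the time evolution of the resulting expression is governed by the Itô/Stratonovich dynamics of $\phi_t$, and the Stratonovich form is precisely what makes the ordinary chain rule valid, so that $u_0(\phi_t^{-1}(x))$ is transported without an extra Itô correction term. Matching the semimartingale decomposition of $\int u(t,x)\theta(x)\,dx$ against parts iii) of the definition confirms that the drift and the Stratonovich noise terms appear correctly. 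For uniqueness, since the hypotheses of Theorem \ref{thmnoblowup} imply those of Theorem \ref{fgpp} (a $C^1_b$ initial datum is in particular in $L^\infty$, and $b \in L^\infty(0,T;C^\alpha_b)$ with the relevant divergence condition, or one uses that any $C^1$-solution is a fortiori a weak $L^\infty$-solution), the weak solution is unique and must coincide with \eqref{df1}.

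The main obstacle will be the rigorous justification of the change-of-variables and Stratonovich chain rule steps, since $u_0 \circ \phi_t^{-1}$ is only $C^1$ in space (and $D\phi_t^{-1}$ merely Hölder), so one cannot naively apply the Itô formula that requires $C^2$ regularity. I would handle this by a mollification argument: approximate $u_0$ by smooth $u_0^\varepsilon$, write $u^\varepsilon(t,x) = u_0^\varepsilon(\phi_t^{-1}(x))$, apply the Stratonovich chain rule (or the Itô formula plus the explicit It\^o-Stratonovich correction, which cancels against the $\frac12\Delta$ structure of the noise) to the smooth approximants where it is classically valid, and then pass to the limit $\varepsilon \to 0$ using the uniform $C^1$ bounds established in the first step. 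The delicate point is ensuring convergence of the stochastic integral term, for which the boundedness and continuity of $\nabla u^\varepsilon(s,\phi_s(\cdot))$ uniformly in $\varepsilon$, together with dominated convergence for stochastic integrals, suffices.
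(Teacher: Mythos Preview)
Your proposal has two genuine gaps.

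\textbf{Uniqueness.} You invoke Theorem~\ref{fgpp}, but that theorem requires $\operatorname{div} b \in L^p([0,T]\times\mathbb{R}^d)$ for some $p > d\wedge 2$ (or at least $\operatorname{div} b \in L^1_{\mathrm{loc}}$ together with $\alpha>1/2$), and no such hypothesis is present in Theorem~\ref{thmnoblowup}. The parenthetical ``with the relevant divergence condition'' is precisely what is missing. Likewise, the implication ``a $C^1$-solution is a fortiori a weak $L^\infty$-solution'' already needs $\operatorname{div} b\in L^1_{\mathrm{loc}}$ even to make sense of the weak formulation. The paper instead appeals to a separate uniqueness result in $BV_{\mathrm{loc}}$ (Appendix~1 of \cite{FGP}) which requires no assumption on $\operatorname{div} b$; since $C^1\subset BV_{\mathrm{loc}}$, this covers the present case.

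\textbf{The mollification target.} You propose to mollify $u_0$ and apply the Stratonovich chain rule to $u_0^\varepsilon(\phi_t^{-1}(x))$. But the obstruction to the chain rule is not the regularity of $u_0$; it is the regularity of the flow. With $b$ merely $C^\alpha$, the flow $\phi_t$ is only $C^{1+\alpha'}$, so the Jacobian evolution $dJ\phi_t(y)=\operatorname{div} b(t,\phi_t(y))\,J\phi_t(y)\,dt$ is not available (there is no classical $\operatorname{div} b$), and the It\^o--Kunita formula needed to derive the semimartingale decomposition of $t\mapsto\phi_t^{-1}(x)$ requires $\phi_t(\cdot)\in C^2$. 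Making $u_0$ smooth does nothing for either of these. The paper's approach is to mollify $b$: the approximating flow $\phi_t^\varepsilon$ is then smooth, the Jacobian identity and It\^o's formula hold classically, and after a change of variables and an integration by parts the term containing $\operatorname{div} b^\varepsilon$ is converted into one involving only $b^\varepsilon$ and $\nabla u_0$, which survives the limit $\varepsilon\to 0$ via the flow stability estimates (Theorem~\ref{th:flow1}(iii) and Remark~\ref{inverse}). This integration-by-parts step is the key device that lets one avoid ever making sense of $\operatorname{div} b$ for the limiting drift.
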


The main claim of this theorem is the regularity of the solution for positive
times, which is new with respect to the deterministic case. The uniqueness
claim is known, as a particular case of a result in $BV_{loc}$, see Appendix 1
of [8].

Notice that, for solutions with such degree of regularity ($BV_{loc}$ or
$C^{1}$), no assumption on $\operatorname{div}b$ is required;
$\operatorname{div}b$ does not even appear in the definition of solution (see
below). On the contrary, to reach uniqueness in the much wider class of weak
$L^{\infty}$-solutions, in [8] we had to impose the additional condition (4)
on $\operatorname{div}b$, for some $p>d\wedge2$ ($\operatorname{div}b$ also
appears in the definition of weak $L^{\infty}$-solution); this happens also in
the deterministic theory.

\subsection{Some other works on regularization by noise}

The following list does not aim to be exhaustive, see for instance \cite{Fla} for other
results and references:

\begin{itemize}
\item the uniqueness for linear transport equations can be extended to other
weak assumptions on the drift, \cite{AF}, \cite{Mau1}; also no blow-up holds
for $L^{p}$ drift see \cite{FeFla} and  \cite{pro};

\item similar results hold for linear continuity equations, \cite{FGP cont},
\cite{Mau2}:
\[
\frac{\partial\rho}{\partial t}+\operatorname{div}\left(  b\rho\right)
=0,\qquad\rho|_{t=0}=\rho_{0}:
\]
a noise of the form $\nabla\rho\circ\frac{dW}{dt}$ prevents mass concentration;

\item analog results hold for the vector valued linear equations%
\[
\frac{\partial M}{\partial t}+\operatorname{curl}\left(  b\times M\right)  =0
\]
similar to the vorticity formulation of 3D Euler equations or
magneto-hydrodynamics, where the singularities in the deterministic case are
not shocks but infinite values of $M$; a noise of the form
$\operatorname{curl}\left(  e\times M\right)  \circ\frac{dW}{dt}$ prevents
blow-up \cite{F Nek};

\item improved Strichartz estimates for a special Schr\"{o}dinger model with
noise%
\[
i\partial_{t}u+\Delta u\circ\frac{dW}{dt}=0
\]
have been proved, which are stronger than the corresponding ones for
$i\partial_{t}u+\Delta u=0$ and allow to prevent blow-up in a non-linear case
when blow-up is possible without noise, see \cite{DT10};

\item nonlinear transport type equations of two forms have been investigated:
2D Euler equations and 1D Vlasov-Poisson equations; in these cases non-collapse
of measure valued solutions concentrated in a finite number of points has been
proved, \cite{FGP Euler}, \cite{DFV}.
\end{itemize}
We conclude the introduction with some notations.

\subsection {Notations} \label{sect notations} Usually we
denote by $D_{i}f$ the derivative in the $i$-th coordinate direction
and with $(e_{i})_{i=1,\dots,d}$ the canonical basis of
$\mathbb{R}^{d}$ so that $D_{i}f=e_{i}\cdot Df$. For partial
derivatives of any order $n\geq1$ we use the notation $D_{i_{1},...,i_{n}}%
^{n}$. If $\eta:\mathbb{R}^{d}\rightarrow\mathbb{R}^{d}$ is a $C^{1}%
$-diffeomorphism we will denote by $J\eta(x)=\text{det}[D\eta(x)]$ its
Jacobian determinant. For a given function $f$ depending on $t\in\lbrack0,T]$
and $x\in{\mathbb{R}}^{d}$, we will also adopt the notation $f_{t}(x)=f(t,x)$.

Let $T>0$ be fixed. For ${\alpha}\in(0,1)$ define the space
$L^{\infty}\left(
0,T;C_{b}^{\alpha}({\mathbb{R}}^{d})\right)  $ as the set of all bounded Borel
functions $f:[0,T]\times{\mathbb{R}}^{d}\rightarrow{\mathbb{R}}$ for which
\[
\lbrack f]_{\alpha,T}=\sup_{t\in\lbrack0,T]}\sup_{x\neq y\in{\mathbb{R}}^{d}%
}\frac{|f(t,x)-f(t,y)|}{|x-y|^{\alpha}}<\infty
\]
($|\cdot|$ denotes the Euclidean norm in ${\mathbb{R}}^{d}$ for every $d$, if
no confusion may arise). This is a Banach space with respect to the usual norm
$\Vert f\Vert_{{\alpha},T}=\Vert f\Vert_{0}+[f]_{\alpha,T}$ where $\Vert
f\Vert_{0}=\sup_{(t,x)\in\lbrack0,T]\times{\mathbb{R}}^{d}}|f(t,x)| $.
 Similarly, when $\alpha =1$ we define $L^{\infty}\left(
0,T; Lip_b({\mathbb{R}}^{d})\right)$.

We write $L^{\infty}\left(  0,T;C_{b}^{\alpha}({\mathbb{R}}^{d};{\mathbb{R}%
}^{d})\right)  $ for the space of all vector fields $f:[0,T]\times{\mathbb{R}%
}^{d}\rightarrow{\mathbb{R}}^{d}$ having all components in $L^{\infty}\left(
0,T;C_{b}^{\alpha}({\mathbb{R}}^{d})\right)  $.

%\vskip 1mm

Moreover, for $n\geq1,$ $f\in L^{\infty}\left(  0,T;C_{b}^{n+\alpha
}({\mathbb{R}}^{d})\right)  $ if all spatial partial derivatives
$D_{i_{1},...,i_{k}}^{k}f\in L^{\infty}\left(  0,T;C_{b}^{\alpha}({\mathbb{R}%
}^{d})\right)  $, for all orders $k=0,1,\dots,n$. Define the corresponding
norm as
\[
\Vert f\Vert_{n+\alpha,T}=\Vert f\Vert_{0}+\sum_{k=1}^{n}\Vert
D^{k}f\Vert _{0}+[D^{n}f]_{\alpha,T},
\]
where we extend the previous notations $\Vert\cdot\Vert_{0}$ and
$[\cdot]_{\alpha,T}$ to tensors. The definition of the space $L^{\infty
}\left(  0,T;C_{b}^{n+\alpha}({\mathbb{R}}^{d};{\mathbb{R}}^{d})\right)  $ is
similar.
%The previous functions spaces can be defined similarly when
%$T=+\infty$ (i.e., we are considering functions defined on $[0,\infty
%)\times\mathbb{R}^{d}$).
 The spaces $C_{b}^{n+{\alpha}}({\mathbb{R}}^{d})$ and
$C_{b}^{n+{\alpha}}({\mathbb{R}}^{d};{\mathbb{R}}^{d})$ are defined as before
but only involve functions $f:{\mathbb{R}}^{d}\rightarrow{\mathbb{R}}^{d}$
which do not depend on time. Moreover, we say that $f:\mathbb{R}%
^{d}\rightarrow\mathbb{R}^{d}$ belongs to $C^{n,\alpha}$, $n\in\mathbb{N} $,
$\alpha\in(0,1)$, if $f$ is continuous on $\mathbb{R}^{d}$, $n$-times
differentiable with all continuous derivatives and the derivatives of order
$n$ are locally $\alpha$-H\"{o}lder continuous. Finally, $C_{0}^{0}%
(\mathbb{R}^{d})$ denotes the space of all real continuous functions defined
on $\mathbb{R}^{d}$, having compact support and by $C_{0}^{\infty}%
(\mathbb{R}^{d})$ its subspace consisting of infinitely differentiable functions.

For any $r>0$ we denote by $B(r)$ the Euclidean ball centered in 0 of radius
$r$ and by $C_{r}^{\infty}(\mathbb{R}^{d})$ the space of smooth functions with
compact support in $B(r)$; moreover, $\Vert\cdot\Vert_{L_{r}^{p}}$ and
$\Vert\cdot\Vert_{W_{r}^{1,p}}$ stand for, respectively, the $L^{p}$-norm and
the $W^{1,p}$-norm on $B\left(  r\right)  $, $p\in\left[  1,\infty\right]  $.
We let also $[f]_{C^{\theta}_{r}}=\sup_{x\neq y \in B(r)}%
|f(x)-f(y)|/|x-y|^{\theta}$.

We will often use the standard mollifiers\textit{.} Let $\vartheta
:{\mathbb{R}}^{d}\rightarrow{\mathbb{R}}$ be a smooth test function such that
$0\leq\vartheta(x)\leq1$, $x\in{\mathbb{R}}^{d}$, $\vartheta(x)=\vartheta
(-x)$, $\int_{{\mathbb{R}}^{d}}\vartheta(x)dx=1$, $\mathrm{supp}%
\,(\vartheta)\subset$ $B(2)$, $\vartheta(x)=1$ when $x\in B(1)$. For any
${\varepsilon}>0$, let $\vartheta_{{\varepsilon}}(x)={\varepsilon}%
^{-d}\vartheta(x/{\varepsilon})$ and for any distribution $g:{\mathbb{R}}%
^{d}\rightarrow{\mathbb{R}}^{n}$ we define the mollified approximation
$g^{\varepsilon}$ as
\begin{equation}
g^{\varepsilon}(x)=\vartheta_{{\varepsilon}}\ast g(x)=g(\vartheta
_{{\varepsilon}}(x-\cdot)),\;\;\;x\in{\mathbb{R}}^{d}.\label{molli}%
\end{equation}
If $g$ depends also on time $t$, we consider $g^{\varepsilon}(t,x)=(\vartheta
_{{\varepsilon}}\ast g(t,\cdot))(x)$, $t\in\lbrack0,T]$, $x\in\mathbb{R}^{d}$.

%%%%%%%%%%%%%%%%%%%%%%%%%%%%%%%%%
\medskip Recall that, for any smooth bounded domain $\mathcal{D}$ of
$\mathbb{R}^{d}$, we have: $f \in W^{\theta, p} (\mathcal{D})$, $\theta
\in(0,1)$, $p \ge1$, if and only if $f \in L^{p} (\mathcal{D})$ and
\[
[f]_{W^{\theta, p} }^{p} =\iint_{\mathcal{D }\times\mathcal{D}} \frac{|f(x) -
f(y)|^{p}}{|x-y|^{\theta p + d}} dx dy < \infty.
\]
We have $W^{1, p} (\mathcal{D}) \subset W^{\theta, p} (\mathcal{D})$,
$\theta\in(0,1)$.
%%%%%%%%%%%%%%%%%%%%%%%%%%%%%%%%%

\bigskip In the sequel we will assume a stochastic basis with a
$d$-dimensional Brownian motion $\left(  \Omega,\left(  \mathcal{F}{}%
_{t}\right)  ,{}\mathcal{F},P,\left(  W_{t}\right)  \right)  $ to be given. We
denote by $\mathcal{F}_{s,t}$ the completed $\sigma$-algebra generated by
$W_{u}-W_{r}$, $s\leq r\leq u\leq t$, for each $0\le s<t$.

\smallskip

Let us finally recall our basic assumption on the drift vector
field.
\begin{hypothesis}
\label{hy1} There exists ${\alpha}\in(0,1)$ such that $b\in
L^{\infty}\left(
0,T;C_{b}^{\alpha}({\mathbb{R}}^{d};{\mathbb{R}}^{d})\right)  $.
\end{hypothesis}

\section{No blow-up in $C^{1}\label{section no blow-up}$}

This section is devoted to prove Theorem \ref{thmnoblowup}.
Since the solution
claimed by this theorem is regular, we do not need to integrate over test
functions in the term $b\cdot\nabla u$ and thus we do not need to require
$\operatorname{div}b\in L_{loc}^{1}$. For this reason, we modify the
definition of solution.
\begin{definition}
\label{c11} Assume $b\in L_{loc}^{1}$, $u_{0}\in C^{1}_{b}\left(
\mathbb{R}^{d}\right)  $. We say that a stochastic process $u  \in L^{\infty
}(\Omega\times\lbrack0,T]\times\mathbb{R}^{d})$ is a classical $C^{1}%
$-solution of the stochastic transport equation if:
\begin{itemize}
\item[i)] $u(\omega,t,\cdot)\in C^{1}(\mathbb{R}^{d})$ for a.e. $(\omega,t)\in
\Omega\times\lbrack0,T]$;
\item[ii)] for all $\theta\in C_{0}^{\infty}\left(  \mathbb{R}^{d}\right)  $, $\int_{{\mathbb R}^d}u\left(  t,x\right)  \theta\left(  x\right)  dx$ is a continuous adapted semimartingale;
\item[iii)] for all $\theta\in C_{0}^{\infty}\left(  \mathbb{R}^{d}\right)  $, one
has%
\begin{align*}
\int_{{\mathbb R}^d}u\left(  t,x\right)  \theta\left(  x\right)  dx
&  =\int_{{\mathbb R}^d}u_{0}\left( x\right)  \theta\left(  x\right)
dx-\int_{0}^{t}\int_{{\mathbb R}^d}b\left(  s,x\right)
\cdot\nabla u\left(  s,x\right)  \theta\left(  x\right)  dxds\\
&  +\sigma\int_{0}^{t}\left(  \int_{{\mathbb R}^d}u\left(
s,x\right)  \nabla\theta\left( x\right)  dx\right)  \circ dW\left(
s\right)  .
\end{align*}
\end{itemize}
\end{definition}
If $u$ is a classical $C^{1}$-solution and $\mathrm{div}\,b\in L_{loc}^{1}
\left(  \left[  0,T\right]  \times\mathbb{R}^{d}\right)  $, then $u$ is also a
weak $L^{\infty}$-solution.  Conversely, if $u$ is a weak $L^{\infty}%
$-solution, $u_{0}\in C^{1}_{b}\left(  \mathbb{R}^{d}\right)  $ and (i) is
satisfied then $u$ is a classical $C^{1}$-solution.

Before giving the proof we mention the following useful result proved in
\cite[Theorem 5]{FGP}:
\begin{theorem}
\label{th:flow1} Assume that Hypothesis \ref{hy1} holds true for some
$\alpha\in(0,1)$. Then we have the following facts:
\begin{itemize}
\item[(i)] (pathwise uniqueness) For every $s\in\left[  0,T\right]  $,
$x\in{\mathbb{R}}^{d}$, the stochastic equation (\ref{SDE}) has a unique
continuous adapted solution $X^{s,x}=\left(  X_{t}^{s,x}\big(\omega\right)
,t\in\left[  s,T\right]  ,$ $\omega\in\Omega\big)$.
\item[(ii)] (differentiable flow) There exists a stochastic flow $\phi_{s,t}$
of diffeomorphisms for equation (\ref{SDE}). The flow is also of class
$C^{1+{\alpha}^{\prime}}$ for any ${\alpha}^{\prime}<{\alpha}$.
\item[(iii)] (stability) Let $(b^{n})\subset L^{\infty}\left(  0,T;C_{b}%
^{\alpha}({\mathbb{R}}^{d};{\mathbb{R}}^{d})\right)  $ be a sequence of vector
fields and $\phi^{n}$ be the corresponding stochastic flows. If $b^{n}%
\rightarrow b$ in $L^{\infty}( 0,T;C_{b}^{\alpha^{\prime}}({\mathbb{R}}%
^{d};{\mathbb{R}}^{d})) $ for some $\alpha^{\prime}>0$, then, for any $p\geq
1$,
\begin{equation}
\lim_{n\rightarrow\infty}\sup_{x\in{\mathbb{R}}^{d}}\sup_{0\leq s\leq T}%
E[\sup_{r\in\lbrack s,T]}|\phi_{s,r}^{n}(x)-\phi_{s,r}(x)|^{p}%
]=0\label{stability1}%
\end{equation}
\begin{equation}
\sup_{n\in\mathbb{N}}\sup_{x\in{\mathbb{R}}^{d}}\sup_{0\leq s\leq T}%
E[\sup_{u\in\lbrack s,T]}\Vert D\phi_{s,u}^{n}(x)\Vert^{p}]<\infty
,\label{bound}%
\end{equation}
\begin{equation}
\lim_{n\rightarrow\infty}\sup_{x\in{\mathbb{R}}^{d}}\sup_{0\leq s\leq T}%
E[\sup_{r\in\lbrack s,T]}\Vert D\phi_{s,r}^{n}(x)-D\phi_{s,r}(x)\Vert
^{p}]=0.\label{stability2}%
\end{equation}
\end{itemize}
\end{theorem}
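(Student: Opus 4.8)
The plan is to linearize the problem by the It\^{o}--Tanaka (Zvonkin) transformation already sketched in the Introduction, reducing (\ref{SDE}) to a stochastic equation with $C^{1+\alpha}_b$ coefficients, for which the classical theory of stochastic flows (\cite{K}) delivers all three claims. The only nontrivial analytic ingredient is the PDE regularity of Theorem \ref{theorem SDE flux}. By rescaling it suffices to treat $\sigma=1$. Fix $\lambda$ so large that the solution $U$ of the backward parabolic problem in Theorem \ref{theorem SDE flux} satisfies $\|\nabla U\|_{0}\le 1/2$, and set $\gamma_t(x)=x+U(t,x)$. Since $\nabla\gamma_t=I+\nabla U_t$ with $\|\nabla U_t\|\le 1/2$, the map $\gamma_t$ is, for each $t$, a global $C^{2+\alpha}$ diffeomorphism of $\mathbb{R}^d$ (global invertibility being a contractive perturbation of the identity), with inverse $\gamma_t^{-1}$ of class $C^{2+\alpha}$ and all bounds uniform in $t$.

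Applying It\^{o}'s formula to $Y_t:=\gamma_t(X_t)=X_t+U(t,X_t)$ along any solution $X$ of (\ref{SDE}), and using that $U$ solves $\partial_tU+b\cdot\nabla U+\tfrac12\Delta U=-b+\lambda U$, the drift $b(t,X_t)$ cancels and one obtains
\[
dY_t=\lambda\,U\big(t,\gamma_t^{-1}(Y_t)\big)\,dt+\big(I+\nabla U\big(t,\gamma_t^{-1}(Y_t)\big)\big)\,dW_t.
\]
Because $U\in L^{\infty}(0,T;C^{2+\alpha}_b)$ and $\gamma_t^{-1}\in C^{2+\alpha}$, both transformed coefficients $\widetilde b(t,y)=\lambda U(t,\gamma_t^{-1}(y))$ and $\widetilde\sigma(t,y)=I+\nabla U(t,\gamma_t^{-1}(y))$ belong to $L^{\infty}(0,T;C^{1+\alpha}_b)$. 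This is the decisive gain: the new equation has Lipschitz, indeed $C^{1+\alpha}_b$, coefficients. Claims (i) and (ii) now follow from the classical theory in \cite{K}: the $Y$-equation has a unique strong solution and generates a stochastic flow $\psi_{s,t}$ of $C^{1+\alpha'}$ diffeomorphisms for every $\alpha'<\alpha$. Since $X\mapsto Y=\gamma(X)$ is a bijective bi-Lipschitz change of variables, pathwise uniqueness for $Y$ transfers to $X$, proving (i); and $\phi_{s,t}=\gamma_t^{-1}\circ\psi_{s,t}\circ\gamma_s$ is then the flow for (\ref{SDE}), inheriting the $C^{1+\alpha'}$ diffeomorphism property from $\psi$ and $\gamma^{\pm1}$, proving (ii).

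For the stability claim (iii) I would run the same construction for each $b^n$ with a common $\lambda$, producing $U^n$, $\gamma^n$ and transformed coefficients $\widetilde b^{\,n},\widetilde\sigma^{\,n}$. The first step is PDE stability: applying the linear estimate of Theorem \ref{theorem SDE flux} to the difference $U^n-U$, whose source term is governed by $b^n-b$ and by $\nabla(U^n-U)$, I expect $b^n\to b$ in $L^{\infty}(0,T;C^{\alpha'})$ to force $U^n\to U$ in $L^{\infty}(0,T;C^{1+\alpha'})$, hence $\gamma^n\to\gamma$ and $(\gamma^n)^{-1}\to\gamma^{-1}$ together with one derivative. This gives $\widetilde b^{\,n}\to\widetilde b$ and $\widetilde\sigma^{\,n}\to\widetilde\sigma$ in $L^{\infty}(0,T;C^{\alpha'})$ with $C^{1}$-bounds uniform in $n$. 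The estimates (\ref{stability1})--(\ref{stability2}) then follow from the standard $L^p$ stability of flows of SDEs with converging $C^{1+\alpha}_b$ coefficients, transported back through the maps $\gamma^n,\gamma$ of fixed regularity.

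The main obstacle is the stability part (iii), and within it the quantitative PDE stability $U^n\to U$ in a norm controlling one spatial derivative. The transformation and the invocation of Kunita's flow theory are essentially mechanical once Theorem \ref{theorem SDE flux} is granted; the delicate point is the convergence $D\phi^n_{s,r}\to D\phi_{s,r}$ in $L^p$ \emph{uniformly} in the initial point $x$ and time $s$, which forces one to propagate the PDE stability through both the change of variables and the linearized equation for the Jacobian $D\psi^n$. Particular care is required because only the weaker $C^{\alpha'}$ convergence of $b^n$ is assumed: the uniform-in-$n$ bound (\ref{bound}) must be extracted from the uniform $C^{\alpha}$ bound, while the convergence (\ref{stability2}) comes from the $C^{\alpha'}$ topology alone.
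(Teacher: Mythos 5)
Your proposal is correct and follows essentially the same route as the paper: Theorem \ref{th:flow1} is quoted from \cite[Theorem 5]{FGP}, whose proof is exactly the It\^o--Tanaka (Zvonkin) transformation $\gamma_t(x)=x+U(t,x)$ built on the backward parabolic problem of Theorem \ref{theorem SDE flux}, as the paper itself sketches in Section 1.1. The only points you gloss over --- the application of It\^o's formula to $U$, which is merely $L^\infty$ in time in its first derivative (handled by mollification in \cite{FGP}), and the quantitative Schauder stability $U^n\to U$ in $L^{\infty}(0,T;C^{1+\alpha'})$ needed for (iii) --- are exactly the technical steps carried out there, so no genuine gap remains.
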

\begin{remark}
\label{inverse} \emph{We point out that the previous assertions
\eqref{stability1}, \eqref{bound} and \eqref{stability2} also holds when
$\phi_{s,r}^{n}(x)$ and $\phi_{s,r}(x)$ are replaced respectively by
$(\phi_{s,r}^{n})^{-1}(x)$ and $(\phi_{s,r})^{-1}(x)$. }

\emph{To see this note that for a fixed $t>0$, $Z_{s} = (\phi_{s,t})^{-1}(x)
$, $s \in[0,t]$, is measurable with respect to  $\mathcal{F}_{s,t}$ (the
completed $\sigma$-algebra generated by $W_{u}-W_{r}$, $s\leq r\leq u\leq t$,
for each $0\le s<t$) and solves
\begin{equation}
\label{neweq}Z_{s}=x-\int_{s}^{t}b(r, Z_{r})dr - \sigma[W_{t}-W_{s}].
\end{equation}
This is a simple backward stochastic differential equations, of the same form
as the original one (only the drift has opposite sign).  Note that for
regular
functions $f \in C^{2}_{b}(\mathbb{R}^{d})$, It\^o's formula becomes
\[
f(Z_{s}) = f(x) - \int_{s}^{t} \nabla f(Z_{r})
\cdot b(r, Z_{r})dr  - \int%
_{s}^{t} \nabla f (Z_{r}) \cdot dW_{r} -  \frac{\sigma^{2}}{2}
\int_{s}^{t} \triangle f ( Z_{r})dr
\]
where $\int_{s}^{t} \nabla f (Z_{r}) \cdot dW_{r}$ is the so called
backward It\^o integral (is a limit in probability of elementary
integrals like $\sum_{k} \nabla f (Z_{s_{k}}) \cdot(W_{s_{k}}-
W_{s_{k-1}})$ in which we consider the partition $s_{0}=0 < \ldots<
s_{N} =t$). Since this stochastic integral enjoys usual properties
of the classical It\^o  integral, one can repeat all the arguments
needed to prove \eqref{stability1}, \eqref{bound} and
\eqref{stability2} even for solutions $Z$ to \eqref{neweq}. }
\end{remark}

\begin{proof}
(\textbf{Theorem \ref{thmnoblowup}})
Under the assumptions of the theorem, it has been proved in Appendix 1 of [8]
that unqueness holds in $BV_{loc}$. Hence it holds in $C^{1}$. For this
result, no assumption on $\operatorname{div}b$ is required.

 We show now that \eqref{df1} is a classical
$C^{1}$-solution.
It is easy to check (i) in Definition \ref{c11}.  Moreover, if $\theta\in
C_{0}^{\infty}(\mathbb{R} ^{d})$, by changing variable we have:
%from
%Proposition~\ref{lemma:diff-lp},%
\[
\int_{{}{\mathbb{R}}^{d}}u(t,x)\theta(x)dx=\int_{{}{\mathbb{R}}^{d}}
u_{0}(y)\theta(\phi_{t}(y))J \phi_{t}(y)dy,
\]
where $J \phi_{t}(y) = \text{det}[D \phi_{t} (y) ]$,  and so also property
(ii) follows.
To prove property (iii) consider the flow $\phi_{t}^{\varepsilon}$ for the
regularized vector field $b^{\varepsilon}$ (see \eqref{molli}) and let $J
\phi_{t}^{\varepsilon}(y)$ be its Jacobian determinant. Note that $u_{0}
\circ(\phi^{\varepsilon}_{t})^{-1} \to u_{0} \circ\phi_{t}^{-1}$ weakly in
$L^{\infty}(\mathbb{R}^{d})$, uniformly in $t \in[0,T]$ and $P$-a.s., indeed
for $\theta\in C_{0}^{\infty}(\mathbb{R} ^{d})$ we have
\[
\int_{{}{\mathbb{R}}^{d}} (u_{0} \circ(\phi^{\varepsilon}_{t})^{-1} )(y)
\theta(y) dy = \int_{{}{\mathbb{R}}^{d}} u_{0}(y) \theta(\phi^{\varepsilon
}_{t}(y)) J \phi_{t}^{\varepsilon}(y) dy
\]%
\[
\to
\int_{{}{\mathbb{R}}^{d}} u_{0}(y) \theta(\phi_{t}(y)) J \phi_{t}(y) dy,
\]
 as $\epsilon \to 0$,
  using the properties of the stochastic flow stated in Theorem
 \ref{th:flow1}.
  By density we can extend this convergence to any $\theta\in
L^{1}(\mathbb{R} ^{d})$. Moreover since $b^{\varepsilon}$ is smooth,
it is easy to prove that
\[
dJ_{t}^{\varepsilon}(y)={\mathrm{div}\,}b^{\varepsilon}_{t}(\phi
_{t}^{\varepsilon}(y))J \phi_{t}^{\varepsilon}(y)dt
\]
and by the It\^o formula we find
\begin{equation}
\label{ii}
\begin{split}
\int_{{\mathbb{R}}^{d}}u_{0}(y) & \theta(\phi_{t}^{\varepsilon}(y))J_{t} %
^{\varepsilon}(y)dy  =
\\ &  \int_{{\mathbb{R}}^{d}}u_{0}(y)\theta(y)dy+\int%
_{0}^{t}ds \int_{{\mathbb{R}}^{d}}u_{0}(y)L^{b^{\varepsilon}}\theta(\phi
_{s}^{\varepsilon}(y))J \phi_{s}^{\varepsilon}(y)dy\\
&  +\int_{0}^{t}ds\int_{{\mathbb{R}}^{d}}u_{0}(y) \theta(\phi_{s}%
^{\varepsilon}(y)){\mathrm{div}\, }b^{\varepsilon}_{s} (\phi_{s}^{\varepsilon
}(y))J \phi_{s}^{\varepsilon}(y)dy\\
&  +\sigma\int_{0}^{t}dW_{s}\cdot\int_{{\mathbb{R}}^{d}}u_{0}(y)\nabla
\theta(\phi_{s}^{\varepsilon}(y))J\phi_{s}^{\varepsilon}(y)dy,
\end{split}
\end{equation}
where
\[
L^{b^{\varepsilon}} \theta(y) = \frac{1}{2}\sigma^{2} \Delta\theta
(y)+b^{\epsilon}_{s}(y)\cdot\nabla\theta(y).
\]
Note that, integrating by parts,
\[
\int_{0}^{t}ds\int_{{\mathbb{R}}^{d}}u_{0}(y) \theta(\phi_{s}^{\varepsilon
}(y)){\mathrm{div}\, }b^{\varepsilon}_{s} (\phi_{s}^{\varepsilon}(y))J
\phi_{s}^{\varepsilon}(y)dy
\]
\[
= \int_{0}^{t}ds\int_{{\mathbb{R}}^{d}}u_{0}((\phi_{s}^{\varepsilon})^{-1}(x)
) \theta(x){\mathrm{div}\, }b^{\varepsilon}_{s} (x) dx
\]
\[
= - \int_{0}^{t}ds\int_{{\mathbb{R}}^{d}}u_{0} ((\phi_{s}^{\varepsilon}%
)^{-1}(x) ) \nabla\theta(x)\cdot b^{\varepsilon}_{s} (x) dx
\]
\[
- \int_{0}^{t}ds\int_{{\mathbb{R}}^{d}} \nabla u_{0}((\phi_{s}^{\varepsilon
})^{-1}(x) ) D (\phi_{s}^{\varepsilon})^{-1}(x) \cdot b^{\varepsilon}_{s} (x)
\, \theta(x) dx.
\]
Therefore
\[
%\label{ii}
\begin{split}
\int_{{\mathbb{R}}^{d}}u_{0}(y) & \theta(\phi_{t}^{\varepsilon}(y))J_{t}%
^{\varepsilon}(y)dy = \\
& \int_{{\mathbb{R}}^{d}}u_{ 0}(y )\theta(y)dy+ \frac{1}{2}\sigma^{2}\int%
_{0}^{t}ds \int_{{\mathbb{R} }^{d}}u_{0}(( \phi_{s}^{\varepsilon})^{-1}(x) )
\triangle\theta(x) dx\\
& - \int_{0}^{t}ds\int_{{\mathbb{R}}^{d}} \nabla u_{0}((\phi_{s}^{\varepsilon
})^{-1}(x) ) D (\phi_{s}^{\varepsilon})^{-1}(x) \cdot b^{\varepsilon}_{s} (x)
\, \theta(x) dx\\
& +\sigma\int_{0}^{t}dW_{s}\cdot\int_{{\mathbb{R}}^{d}}u_{0}(y)\nabla\theta
(\phi_{s}^{\varepsilon}(y))J\phi_{s}^{\varepsilon}(y)dy.
\end{split}
\]
By changing variable $y = ( \phi_{s}^{\varepsilon})^{-1}(x)$ of the second and
third integral in the right-hand side, there are no problems to pass to the
limit as $\epsilon\to0$, $\mathbb{P}$-a.s., using (iii) in Theorem
\ref{th:flow1} and Remark  \ref{inverse} (precisely, one can pass to the limit
along a suitable sequence $(\epsilon_{n}) \subset(0,1)$ converging to 0).
To this purpose we only note that for the stochastic integral we have
\[
\int_{0}^{t}dW_{s}\cdot\int_{{\mathbb{R}}^{d}}u_{0}(y)\nabla\theta(\phi
_{s}^{\varepsilon}(y))J \phi_{s}^{\varepsilon}(y)dy \, \to\,  \int_{0}%
^{t}dW_{s}\cdot\int_{{\mathbb{R}}^{d}}u_{0}(y)\nabla\theta(\phi_{s}(y))J
\phi_{s}(y)dy
\]
uniformly on $[0,T]$ in $L^{2} (\Omega)$ as $\varepsilon\to0$.
Finally we get
\[
%\label{ii}%
\begin{split}
\int_{{\mathbb{R}}^{d}}u_{0}( ( \phi_{t})^{-1}(x) )\theta(x)dx =\int%
_{{\mathbb{R}}^{d}}u_{ 0}(y )\theta(y)dy+
\frac{\sigma^2}{2}\int_{0}^{t}ds
\int_{{\mathbb{R} }^{d}}u_{0}(( \phi_{s})^{-1}(x) ) \triangle\theta(x) dx\\
- \int_{0}^{t}ds\int_{{\mathbb{R}}^{d}} \nabla u_{0}((\phi_{s})^{-1}(x) ) D
(\phi_{s})^{-1}(x) \cdot b_{s} (x) \, \theta(x) dx\\
+\sigma\int_{0}^{t}dW_{s}\cdot\int_{{\mathbb{R}}^{d}}u_{0}(( \phi_{s}%
)^{-1}(x))\nabla\theta(x)dx.
\end{split}
\]
By passing from It\^o to Stratonovich integral this is exactly the formula we
wanted to prove. The proof is complete.
\end{proof}

\begin{remark}
\emph{One can show that the boundedness assumption  on $b$ is not important to
prove the previous Theorem \ref{thmnoblowup}. Indeed at least when $b $ is
independent on $t$, one can prove the result with $b$ possibly unbounded, only
assuming that its component $b_{i}$ are ``locally uniformly $\alpha
$-H\"{o}lder continuous'', i.e.,
\begin{equation}
\label{vai}[ b_{i}]_{\alpha,1}:=\sup_{x\neq y\in\mathbb{R}^{d}}\frac{|b_{i}
(x)-b_{i}(y)|}{(|x-y|^{\alpha}\vee|x-y|)}<+\infty, \;\;\; i=1, \ldots, d,
\end{equation}
where $a\vee b=\max(a,b)$, for $a,b\in\mathbb{R}$. Under \eqref{vai}  one can
still construct a stochastic differentiable  flow $\phi_{t}(x)$  (see Theorem
7 in \cite{FGP3}) which satisfies  properties \eqref{bound} and
\eqref{stability2} (see also Remark  \ref{inverse}) and this  allows  to
perform the same proof of Theorem  \ref{thmnoblowup}. }
\end{remark}

\section{A stability property}

The following result shows a  \textit{stability property} for the
solutions of the SPDE; such property involves  the  $weak*$ topology (or the
$\sigma(L^{\infty}(\mathbb{R}^{d}),  L^{1}(\mathbb{R}^{d}))$-topology).

\begin{proposition}
\label{stab} Assume that Hypothesis \ref{hy1} holds true for some  $\alpha
\in(0,1)$. Moreover, denote by $\phi_{t} = \phi_{0,t} $  the stochastic flow
for equation (\ref{SDE}).  Then, for any sequence $(v^{n}) \subset L^{\infty
}(\mathbb{R}^{d})$, we have:
\[
v_{n} \to v \in L^{\infty}(\mathbb{R}^{d})\;\; \text{in $weak*$ topology}
\;\;  \Longrightarrow\;\; v_{n} (\phi_{t}^{-1}(\cdot))  \to v (\phi_{t}%
^{-1}(\cdot))
\]%
\[
\;\; \text{in $weak*$ topology,}
\]
uniformly in $t \in[0,T]$, ${P}-$a.s.
\end{proposition}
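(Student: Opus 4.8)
The plan is to prove the stated weak* convergence by testing against an arbitrary $\theta \in L^1(\mathbb{R}^d)$, reducing the statement to a change of variables that transfers the convergence $v_n \to v$ (which we are given in weak* sense) through the flow map. The key observation is that the change of variable $x = \phi_t(y)$ converts an integral of $v_n(\phi_t^{-1}(x))\theta(x)$ against a fixed test function into an integral of $v_n(y)$ against a new test function $\theta(\phi_t(y))\, J\phi_t(y)$. The difficulty is that this new test function is itself random and $t$-dependent, so weak* convergence against a \emph{fixed} $L^1$ function is not immediately enough; I must produce convergence that is uniform in $t$ and holds $P$-a.s.

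First I would fix $\omega$ outside a null set and fix $\theta \in C_0^\infty(\mathbb{R}^d)$ (extending to $L^1$ by density at the end, using the uniform bound $\|v_n\|_{L^\infty}$ supplied by weak* convergence). By the change of variables $x = \phi_t(y)$, using that $\phi_t$ is a $C^1$ diffeomorphism with Jacobian $J\phi_t$ by Theorem~\ref{th:flow1}(ii), I would write
\[
\int_{\mathbb{R}^d} v_n(\phi_t^{-1}(x))\,\theta(x)\,dx = \int_{\mathbb{R}^d} v_n(y)\,\theta(\phi_t(y))\,J\phi_t(y)\,dy,
\]
and similarly for $v$. Setting $\psi_t(y) = \theta(\phi_t(y))\,J\phi_t(y)$, the task is to show $\int v_n \psi_t \to \int v\,\psi_t$ uniformly in $t$. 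The point is that $\psi_t \in L^1(\mathbb{R}^d)$ for each fixed $(t,\omega)$ — indeed it has compact support in $y$ for fixed $t$ since $\theta$ does and $\phi_t^{-1}$ maps $\mathrm{supp}\,\theta$ to a bounded set — so for each fixed $t$ the weak* hypothesis gives pointwise (in $t$) convergence.

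To upgrade pointwise-in-$t$ convergence to uniform-in-$t$ convergence, I would use a compactness/equicontinuity argument on the compact interval $[0,T]$. The family $\{\psi_t : t \in [0,T]\}$ is a continuous curve in $L^1(\mathbb{R}^d)$: continuity of $t \mapsto \phi_t(\cdot)$ and $t \mapsto J\phi_t(\cdot)$ in the appropriate sense (which follows from the flow regularity in Theorem~\ref{th:flow1} and the continuity of the paths) makes $t \mapsto \psi_t$ a continuous map into $L^1$, hence its image is a compact subset of $L^1$. On a compact set of test functions, weak* convergence of the uniformly bounded sequence $(v_n)$ is automatically uniform: given $\eta > 0$, cover the compact curve by finitely many $L^1$-balls of radius $\eta/(4\sup_n\|v_n\|_\infty)$ centered at $\psi_{t_1},\dots,\psi_{t_m}$, choose $N$ so large that $|\int (v_n-v)\psi_{t_j}| < \eta/2$ for all $j$ and $n \ge N$, and then for arbitrary $t$ pick the nearest center $t_j$ and estimate $|\int(v_n - v)\psi_t| \le |\int(v_n-v)\psi_{t_j}| + (\|v_n\|_\infty + \|v\|_\infty)\|\psi_t - \psi_{t_j}\|_{L^1} < \eta$.

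The main obstacle is establishing the $L^1$-continuity of $t \mapsto \psi_t$ uniformly on the relevant bounded region of $\mathbb{R}^d$ — equivalently, the joint control of the continuity of the flow $\phi_t$ and of its Jacobian $J\phi_t$ in $t$, together with the fact that the supports stay in a common compact set for $t \in [0,T]$. Both ingredients are provided by the flow properties in Theorem~\ref{th:flow1}(ii)--(iii) and Remark~\ref{inverse} (the flow and its inverse are $C^1$ with continuous $t$-dependence of derivatives), so this step is a matter of assembling those estimates rather than proving anything genuinely new; the compactness argument then closes the proof, and the extension from $C_0^\infty$ to general $\theta \in L^1$ follows by density since all $v_n$, $v$ share the uniform $L^\infty$ bound.
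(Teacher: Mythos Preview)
Your argument is correct and rests on the same two ingredients as the paper's proof: the change of variables $x=\phi_t(y)$ turning the integral into $\int v_n\,\psi_t$ with $\psi_t(y)=\theta(\phi_t(y))J\phi_t(y)$, and the joint continuity of $(t,y)\mapsto\psi_t(y)$ on $[0,T]\times\mathbb{R}^d$ (with supports contained in a fixed compact) supplied by the flow regularity. Where you diverge is in how you extract uniformity in $t$. You observe that $t\mapsto\psi_t$ is a continuous curve in $L^1$, hence has compact image, and then invoke the general fact that weak* convergence of a bounded sequence is uniform over $L^1$-compact sets via a finite-cover argument. The paper instead argues by contradiction: it takes a bad sequence $t_n$, extracts a convergent subsequence $t_n\to\hat t$, and splits the error into a term controlled by $\|\psi_{t_n}-\psi_{\hat t}\|_{L^1}\to 0$ and a term handled by weak* convergence at a single rational time near $\hat t$; the detour through $[0,T]\cap\mathbb{Q}$ and a countable dense $D\subset C_0^\infty$ is there to pin down a single $P$-null set. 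Your route is cleaner on this last point, since your null set comes solely from flow regularity and does not depend on $\theta$, so no countable-density bookkeeping is needed before the final $L^1$-approximation. Both arguments are equivalent in substance; yours packages the uniformity step more abstractly.
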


\begin{proof} We prove that, $P$-a.s.,
%there exists $\Omega_0 \in {\cal F}$ with $P(\Omega_0) =1$ such that for any $\omega \in \Omega_0$,
 for any $f \in L^{1}(\mathbb{R}^{d})$ we have
\begin{equation}
\label{c7}a_{n} = \sup_{t \in[0,T]} \Big | \int_{\mathbb{R}^{d}} [v_{n}
(\phi_{t}^{-1}( y))- v (\phi_{t}^{-1}( y))] \, f(y) dy \Big | \to 0,
\end{equation}
as $n \to\infty$.

 Recall that there exists a positive constant $M$ such that
$\| v_{n}\|_{0} \le M $, $n \ge1$, and $\| v\|_{0} \le M$ and, moreover,  by the separability of $L^1(\mathbb{R}^d)$ there exists a countable dense set $D \subset C_{0}^{\infty} (\mathbb{R}^{d})$.

It is enough to check
  \eqref{c7} when $f \in D $ (with the event of probability one, possibly depending on $f$).  Indeed,  if $f \in L^{1}(\mathbb{R}^{d})$, we can consider a sequence
$(f_{N}) \subset D$ which converges  to $f$ in
$L^{1}(\mathbb{R}^{d})$ and find, ${P}-$a.s.,
%$$
%a_n \le
%\sup_{t \in [0,T]}
%\Big | \int_{\RR^d}  [v_n (\phi_t^{-1}(y))-
%v (\phi_t^{-1}(y))] \, [f(y)- f_N(y)] dy \Big |
%$$
%$$ +
%\sup_{t \in [0,T]}
%\Big | \int_{\RR^d}  [v_n (\phi_t^{-1}(y))-
%v (\phi_t^{-1}(y))] \, f_N(y) dy \Big |
%$$%
\[
a_{n} \le2 M \int_{\mathbb{R}^{d}} \, |f(y)- f_{N}(y)| dy + \sup_{t \in[0,T]}
\Big | \int_{\mathbb{R}^{d}} [v_{n} (\phi_{t}^{-1}(y))-  v (\phi_{t}^{-1}(y))]
\, f_{N}(y) dy \Big |;
\]
by the previous inequality the assertion follows easily.

%\smallskip {\it II Step. }

\smallskip To prove \eqref{c7}  for a fixed $f \in D$ we
first note that, by changing variable ($J \phi_{t} (x) $  denotes the Jacobian determinant of
$\phi_{t}$ at $x$)
\begin{equation} \label{f5}
\int_{\mathbb{R}^{d}} [v_{n}
(\phi_{t}^{-1}( y))- v (\phi_{t}^{-1}( y))] \, f(y) dy
=   \int_{K} [v (x)-  v_{n} (x)] \,  f(\phi_{ t} (x)) J
\phi_{ t} (x) dx,
\end{equation}
where we have defined the compact set $K = \pi_2 (\{(t,x) \in[0,T]\times\mathbb{R}^{d}
\; :\; \phi_{t}^{-1}(x) \in$ supp$(f) \})$, with $\pi_2(s,x)=x$, $s \in [0,T]$, $x \in \mathbb{R}^d$.

Using that, $P$-a.s., the map: $(t,x) \mapsto$ $f(\phi_{t}(x)) J \phi_{t} (x)$  is continuous
on $[0,T] \times\mathbb{R}^{d}$, we see from \eqref{f5} that
 the map: $t \mapsto \int_{\mathbb{R}^{d}} [v_{n}
(\phi_{t}^{-1}( y))- v (\phi_{t}^{-1}( y))] \, f(y) dy$
is continuous on $[0,T]$ and so, $P$-a.s.,
\begin{equation}
\label{c712}
a_n = \sup_{t \in[0,T]\cap \mathbb{Q}} \Big | \int_{\mathbb{R}^{d}} [v_{n}
(\phi_{t}^{-1}( y))- v (\phi_{t}^{-1}( y))] \, f(y) dy \Big |.
\end{equation}
By \eqref{f5} we also deduce that, $P$-a.s.,
\begin{equation}
\label{c71} \Big | \int_{\mathbb{R}^{d}} [v_{n}
(\phi_{t}^{-1}( y))- v (\phi_{t}^{-1}( y))] \, f(y) dy \Big | \to 0,\;\;\; t \in [0,T] \cap \mathbb{Q}.
\end{equation}
We finish the  proof arguing by contradiction. We consider an event $\Omega_0$ with $P(\Omega_0) =1$ such that \eqref{c712}, \eqref{c71} holds for any $\omega \in \Omega_0$ and also $(t,x) \mapsto$ $f(\phi_{}(t, \omega)(x)) J \phi (t, \omega) (x)$  is continuous
on $[0,T] \times\mathbb{R}^{d}$ for any $\omega \in \Omega_0$.

If \eqref{c7}  does not hold for some $\omega_0 \in \Omega_0$, then there exists
$\varepsilon>0$ and $(t_{n}) \subset[0,T] \cap \mathbb{Q}$ such that
\[
\Big | \int_{\mathbb{R}^{d}} [v_{n} (\phi_{t_{n}}^{-1}( y))-  v (\phi_{t_{n}%
}^{-1}(y))] \, f(y) dy \Big | > \varepsilon
\]
(we do not indicate dependence on $\omega_0$ to simplify notation;  in the sequel we always argue at $\omega_0$ fixed).
Possibly passing to a subsequence, we may assume that $t_{n} \to\hat t
\in[0,T]. $

By changing variable   we have,  for any $n \ge1$,
\[
\varepsilon< \Big | \int_{K } [v (x)-  v_{n} (x)] \, f(\phi_{t_{n}}(x)) J
\phi_{t_{n}} (x) dx \Big | \le(1) + (2),
\]
\[
(1) = \Big | \int_{K } [v (x)-  v_{n} (x)] \, [f(\phi_{t_{n}}(x)) J
\phi_{t_{n}} (x) -  f(\phi_{\hat t} (x)) J \phi_{\hat t} (x) ] dx \Big |,
\]
\[
(2) = \Big | \int_{K } [v (x)-  v_{n} (x)] \,  f(\phi_{\hat t} (x)) J
\phi_{\hat t} (x) dx \Big |.
\]
Now
\[
(1) \le 2 M \int_{K } | f(\phi_{t_{n}}(x)) J \phi_{t_{n}} (x) -  f(\phi_{\hat t
}(x)) J \phi_{\hat t} (x) | dx,
\]
which tends to 0, as $n \to\infty$, ${P}-$a.s., by the dominated convergence
theorem (indeed at $\omega_0$ fixed,  $(t,x) \mapsto$ $f(\phi(t, \omega_0) (x)) J \phi ({t}, \omega_0) (x)$  is continuous
on $[0,T] \times\mathbb{R}^{d}$).

Let us consider (2). By uniform continuity of $f(\phi_{t}( x)) J \phi_{t} (x)$ on $[0,T] \times K$ we may choose $q \in [0,T] \cap \mathbb{Q}$ such that
$$
 | f(\phi_{\hat t}(x)) J \phi_{\hat t} (x) -  f(\phi_{q
}(x)) J \phi_{q} (x) | < \frac{\epsilon } {4M \, \lambda(K)},
$$
for any $x \in K$ (here $\lambda(K)$ is the Lebesgue measure of $K$). Now, for any $n \ge 1,$
\begin{align*}
& (2) \le \Big | \int_{K } [v (x)-  v_{n} (x)] \,  [f(\phi_{\hat t} (x)) J
\phi_{\hat t} (x)- f(\phi_{q} (x)) Jf(\phi_{q} (x)) ]dx   \Big |
\\
& +   \Big | \int_{K } [v (x)-  v_{n} (x)] \,  f(\phi_{q} (x)) J
\phi_{q} (x) dx \Big |
\\
& \le \epsilon/2  + \Big | \int_{K } [v (x)-  v_{n} (x)] \,  f(\phi_{q} (x)) J
\phi_{q} (x) dx \Big |.
\end{align*}
Since $x \mapsto f(\phi_{q }(x)) J
\phi_{q} (x)$ is integrable on $\mathbb{R}^{d}$, we find that  the last term tends
tends to 0,  as $n \to\infty$.

We have found a contradiction. The proof is complete.
\end{proof}

%% fine new part preprint

\section{New uniqueness results}

\label{sec:fractional}

The aim of this section is to prove some new uniqueness results for
$L^{\infty}$ weak solutions of the SPDE obtained extending the key estimates
in fractional Sobolev spaces.

 Unlike  Theorem \ref{thmnoblowup} we will assume more conditions on $b$. On the other hand we will allow $u_0 \in L^{\infty}(\mathbb{R}^{d})$ and prove  stronger uniqueness results in the larger class of weak solutions.
   Recall that the uniqueness statement, in a class
of so regular solutions, of Theorem \ref{thmnoblowup} is rather obvious and does not
require special effort and assumptions on the drift. On the contrary, the
uniqueness claims in a class of weak solutions of Theorems \ref{thm:aux-1} and \ref{thm:aux-2} below
are quite delicate and require suitable conditions on the drift.

 The first result is the following:

\begin{theorem}
\label{thm:aux-1} Let $d \ge2$ and   $u_{0}\in
L^{\infty}\left( \mathbb{R}^{d}\right)  $.  Assume Hypothesis~\ref{hy1} and also that
$${\mathrm{div}\,}b\in L^{q} (0,T;L^{p}(\mathbb{R}^{d})) $$  for some $q>2 \ge p
> \frac{2d}{ d + 2 \alpha}$. Then there exists a unique weak $L^{\infty}%
$-solution $u$ of the Cauchy problem for the transport equation and
$u(t,x)=u_{0}({\phi}_{t}^{-1}( x)) $.
\end{theorem}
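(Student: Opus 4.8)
The plan is to establish uniqueness of weak $L^\infty$-solutions by adapting the DiPerna--Lions commutator strategy, but carried out in fractional Sobolev spaces rather than $L^2$, so that the Hölder regularity of $b$ (rather than full weak differentiability) suffices. The existence part and the representation formula $u(t,x)=u_0(\phi_t^{-1}(x))$ follow from the flow machinery of Theorem~\ref{th:flow1} together with the change-of-variables computation already carried out in the proof of Theorem~\ref{thmnoblowup}; the genuine content is the uniqueness claim, so I would concentrate there.

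Let me walk through the uniqueness argument. First I would reduce to showing that if $u$ is a weak $L^\infty$-solution with $u_0=0$, then $u\equiv 0$; by linearity this gives uniqueness. To exploit the Stratonovich structure, I would convert the SPDE into its Itô form, picking up the second-order correction $\tfrac{\sigma^2}{2}\Delta u$, so that $u$ satisfies a stochastic transport equation whose Itô formulation is amenable to stochastic calculus. Next, following DiPerna--Lions, I would mollify: set $u^\varepsilon = u * \vartheta_\varepsilon$ and write the equation satisfied by $u^\varepsilon$, which produces a commutator term
\[
R^\varepsilon = b\cdot\nabla u^\varepsilon - (b\cdot\nabla u)*\vartheta_\varepsilon.
\]
The heart of the proof is a new \emph{commutator lemma}: I expect to show that $R^\varepsilon\to 0$ in an appropriate space using only $b\in L^\infty(0,T;C_b^\alpha)$, $u\in L^\infty$, and the divergence hypothesis ${\mathrm{div}\,}b\in L^q(0,T;L^p)$. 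Because $b$ is merely Hölder, the classical $L^1$ commutator estimate is unavailable; instead I would estimate $R^\varepsilon$ through the fractional Sobolev seminorm $[\,\cdot\,]_{W^{\theta,p}}$ recalled in the Notations, using the relation $W^{1,p}\subset W^{\theta,p}$ together with the Hölder modulus of $b$ to control $\|R^\varepsilon\|$ and to extract the decay rate in $\varepsilon$.

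Once the commutator vanishes, I would apply the Itô formula to $\beta(u^\varepsilon)$ for a smooth renormalizing function $\beta$ (the renormalization step), integrate against a suitable cutoff test function, and let $\varepsilon\to 0$ to obtain a renormalized equation for $u$ itself. Choosing $\beta(\xi)=\xi^2$ and handling the divergence term via the bound
\[
\int_{\mathbb{R}^d} u^2\, {\mathrm{div}\,}b\, dx \le \|{\mathrm{div}\,}b\|_{L^p}\,\|u\|_{L^{p'}\text{-type}},
\]
controlled using the $L^q(0,T;L^p)$ integrability and Hölder's inequality in space, I would then take expectations (the martingale part has zero expectation) and apply Grönwall's lemma to conclude $E\int u^2\,dx = 0$, hence $u\equiv 0$.

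The main obstacle is clearly the \textbf{fractional commutator estimate}. The delicate point is to match the parameters: the condition $q>2\ge p>\tfrac{2d}{d+2\alpha}$ is precisely what is needed so that the fractional-Sobolev decay gained from mollifying the Hölder field $b$ beats the loss incurred by the divergence term lying only in $L^q(0,T;L^p)$. I would expect to need an interpolation/Sobolev embedding argument linking the Hölder exponent $\alpha$, the integrability exponents $p,q$, and the dimension $d$, and to verify that the combined exponent condition guarantees both that $R^\varepsilon\to 0$ and that the Grönwall estimate closes with finite constants. Getting the parameter bookkeeping exactly right, rather than any single estimate, is where the real work lies.
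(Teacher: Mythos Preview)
Your proposal contains a genuine gap: the direct renormalization/Gr\"onwall scheme you outline does not exploit the noise in an essential way, and without the noise the statement is simply false (the paper itself recalls the deterministic counterexamples). In your argument the only role of the stochastic term is that ``the martingale part has zero expectation''; after that step you are left with a purely deterministic energy inequality for $E\int u^2\,dx$ involving $\mathrm{div}\,b\in L^q(0,T;L^p)$ with $p\le 2$. But $u$ is only in $L^\infty$, so $\int_{\mathbb{R}^d}u^2\,dx$ need not be finite, and even after localisation the term $\int u^2\,\mathrm{div}\,b\,dx$ cannot be absorbed by Gr\"onwall: to close one would need $\mathrm{div}\,b\in L^1(0,T;L^\infty)$, precisely the hypothesis you are trying to avoid. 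The ``fractional commutator estimate'' you hope for would at best produce a bound of the form $[b]_{C^\alpha}[\rho]_{W^{1-\alpha,1}}$ for the commutator against a test function $\rho$; this is indeed available (Corollary~\ref{ci4}(ii)), but it yields nothing beyond what a smooth test function already gives and does not by itself compensate for the lack of $L^\infty$ control on the divergence.

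The paper's route is quite different and is where the noise actually does the work. One composes the mollified equation with the stochastic flow $\phi_t$ and studies $u^\varepsilon(t,\phi_t(x))$; by It\^o's formula the drift and Stratonovich terms cancel and one is left with the commutator \emph{tested against $\rho_\phi(y)=\rho(\phi_t^{-1}(y))\,J\phi_t^{-1}(y)$} (Lemma~\ref{dim12}). Applying the commutator bound of Corollary~\ref{ci4}(ii) then requires $\rho_\phi\in W^{1-\alpha,1}_{\mathrm{loc}}$, i.e.\ fractional Sobolev regularity of the Jacobian $J\phi$. This is supplied by Theorem~\ref{iac11}, whose proof is the heart of the matter: one solves an auxiliary backward parabolic PDE $\partial_tF^\varepsilon+\tfrac12\Delta F^\varepsilon+b^\varepsilon\cdot\nabla F^\varepsilon=\mathrm{div}\,b^\varepsilon$ in $L^q(0,T;W^{2,p})$ (Krylov's maximal regularity), and the condition $p>\tfrac{2d}{d+2\alpha}$ arises from the Sobolev embedding $W^{1,p}_r\hookrightarrow W^{1-\alpha,2}_r$ needed to control the It\^o integral term. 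Thus the parameter window comes from a parabolic-regularity/embedding argument tied to the flow, not from balancing commutator decay rates against a Gr\"onwall constant as you suggest.
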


The main interest of this result is due to the fact that  we can
consider some $p$ in the critical interval  $(1,2]$ not covered by
Hypothesis 2 in \cite{FGP}; recall that this  requires that  there
exists $p \in(2,+\infty)$, such that
\begin{equation}
{\mathrm{div}\,}b\in L^{p} ([0,T]\times\mathbb{R}^{d})\label{uni},
\;\;\;\ d \ge 2.
\end{equation}
The next uniqueness result  requires  an additional hypothesis of
Sobolev regularity for $b$ (beside the usual H\"older regularity)
but allows to   avoid    {\it global} integrability assumptions on
$\mathrm{div}\,b$.
\begin{theorem}
\label{thm:aux-2}
 Assume     $u_{0}\in
L^{\infty}\left( \mathbb{R}^{d}\right)  ,$    $\mathrm{div}\, b\in
L^1_\loc([0,T]\times\RR^d)$ and
\begin{align} \label{d4}
  b \in L^1 (0,T; W_{\loc}^{\theta, 1} (\RR^d) ) \cap L^\infty (0,T; C^{\alpha} (\RR^d) )
\end{align}
with $\alpha, \theta \in (0,1)$  and $\alpha+\theta>1$. Then
 there exists a unique weak $L^{\infty}$-solution $u$ of the
Cauchy problem for the transport equations and $
u(t,x)=u_{0}({\phi}_{t}^{-1}( x)) $.
\end{theorem}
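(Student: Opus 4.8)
The plan is to prove existence and uniqueness separately, following the DiPerna--Lions renormalization philosophy adapted to the stochastic setting, the genuinely new ingredient being a commutator estimate valid under fractional Sobolev regularity of the drift.

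For existence I would verify that $u(t,x)=u_0(\phi_t^{-1}(x))$ is a weak $L^{\infty}$-solution, exactly as in the proof of Theorem~\ref{thmnoblowup}, the only difference being that $u_0\in L^\infty$ is no longer $C^1$. To handle this I approximate $u_0$ by mollifications $u_0^\delta$: for these the representation formula already yields solutions by the computation carried out for Theorem~\ref{thmnoblowup}, and I pass to the limit $\delta\to0$ using the weak-$*$ stability of $v\mapsto v(\phi_t^{-1}(\cdot))$ proved in Proposition~\ref{stab}, together with the flow regularity and stability of Theorem~\ref{th:flow1}. Since the equation is linear in $u$ and $\phi_t$ is a diffeomorphism with controlled Jacobian, both $u\in L^{\infty}(\Omega\times[0,T]\times\RR^d)$ and property (iii) of the definition survive the limit.

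Uniqueness is the substantial part, and by linearity it suffices to show that any weak $L^{\infty}$-solution is \emph{renormalized} and then to identify it with the flow representation. I mollify a solution $u$ and examine the equation for $u^\eps=\vartheta_\eps\ast u$. Converting the Stratonovich noise to It\^o produces the corrector $\tfrac{\sigma^2}{2}\Delta u^\eps$, and this is the decisive structural point: applying It\^o's formula to $\beta(u^\eps)$ for smooth $\beta$, the term $\tfrac{\sigma^2}{2}\beta''(u^\eps)|\nabla u^\eps|^2$ from the quadratic variation of the transport noise combines with $\beta'(u^\eps)\tfrac{\sigma^2}{2}\Delta u^\eps$ into $\tfrac{\sigma^2}{2}\Delta\beta(u^\eps)$, so that all genuinely second-order contributions disappear. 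What remains is the transport part, the divergence term, and the commutator $R^\eps(x)=\int u(y)\,\big(b(x)-b(y)\big)\cdot(\nabla\vartheta_\eps)(x-y)\,dy$ (time suppressed). If I can show $R^\eps\to -u\,\mathrm{div}\,b$ in $L^1_\loc$, then $\beta(u)$ solves the same SPDE with datum $\beta(u_0)$ for every smooth $\beta$; a transport-along-characteristics argument (It\^o's formula for $\beta(u)$ composed with the flow $\phi_t$, justified precisely by this renormalization, and comparison with the explicit solution) then forces $u(t,\phi_t(x))=u_0(x)$, i.e. $u=u_0\circ\phi_t^{-1}$. Here $\mathrm{div}\,b\in L^1_\loc$ and $u\in L^\infty$ are exactly what make the divergence and localization (cut-off) terms integrable, so that no global bound on $\mathrm{div}\,b$ is needed.

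The main obstacle, and the only place where \eqref{d4} is used, is this commutator convergence, which must now be run with $b\in C^{\alpha}\cap W^{\theta,1}_\loc$ rather than the $W^{1,1}_\loc$ of \cite{DiPernaLions}. I would first split $R^\eps=-u\,(\mathrm{div}\,b)^\eps+I^\eps$, where the first term tends to $-u\,\mathrm{div}\,b$ using only $\mathrm{div}\,b\in L^1_\loc$, while $I^\eps(x)=\int[u(x-\eps z)-u(x)]\,\tfrac{1}{\eps}\big(b(x)-b(x-\eps z)\big)\cdot\nabla\vartheta(z)\,dz$ is bilinear in the increments of $u$ and of $b$. The classical argument collapses because the difference quotient $\tfrac{1}{\eps}\big(b(x)-b(x-\eps z)\big)$ no longer converges; instead I would estimate $I^\eps$ directly, combining the pointwise H\"older control $|b(x)-b(x-\eps z)|\le[b]_{\alpha,T}(\eps|z|)^\alpha$ with the integrated Slobodeckij control furnished by $b\in W^{\theta,1}_\loc$, so that the $\eps^{-1}$ lost to the difference quotient is recovered through a H\"older gain of order $\alpha$ and a fractional gain of order $\theta$, the borderline being exactly $\alpha+\theta=1$. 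The real difficulty — and the heart of the new commutator lemma — is that $u$ carries \emph{no} modulus of continuity, so the $u$-increment must be absorbed entirely by the two regularity scales of $b$ (if necessary after taking expectations, to exploit the smoothing of the noise-averaged equation) before the interpolation can be closed under $\alpha+\theta>1$. Once this fractional commutator estimate is in place, everything else reduces to bookkeeping with cut-offs and the flow estimates of Theorem~\ref{th:flow1}.
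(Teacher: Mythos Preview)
Your commutator strategy has a genuine gap. In your term
\[
I^\eps(x)=\int\big[u(x-\eps z)-u(x)\big]\,\tfrac{1}{\eps}\big(b(x)-b(x-\eps z)\big)\cdot\nabla\vartheta(z)\,dz
\]
there is \emph{one} increment of $b$ and one increment of $u$. Since $u\in L^\infty$ carries no modulus of continuity, the $u$-increment contributes nothing; the single $b$-increment can yield at most $\eps^{\alpha}$ (pointwise, via $C^\alpha$) or $\eps^{\theta}$ (after integration in $x$, via $W^{\theta,1}_{\loc}$), but never $\eps^{\alpha+\theta}$ --- you cannot use the same increment twice. So $I^\eps$ scales like $\eps^{\max(\alpha,\theta)-1}$, which diverges, and no interpolation between the two bounds helps because both already spend the $u$-increment trivially. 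Your own parenthetical (``if necessary after taking expectations'') signals awareness that something is missing, but averaging in $\omega$ does not produce spatial regularity of a fixed weak solution.

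The paper circumvents this by never attempting $R^\eps\to -u\,\mathrm{div}\,b$ in $L^1_{\loc}$ and never renormalizing with $\beta$. Instead it composes the mollified solution with the stochastic flow \emph{first}: one studies $u^\eps(t,\phi_t(x))$, whose It\^o differential cancels the transport and noise terms and leaves only $\int\mathcal R_\eps[u,b](\phi_t(x))\rho(x)\,dx$ for a test function $\rho$. After the change of variable $y=\phi_t(x)$ this becomes $\int\mathcal R_\eps[u,b](y)\,\rho_\phi(y)\,dy$ with $\rho_\phi(y)=\rho(\phi_t^{-1}(y))\,J\phi_t^{-1}(y)$. The crucial point is that $\rho_\phi$ inherits H\"older regularity from the Jacobian: by Theorem~\ref{th:flow1} the flow is $C^{1+\alpha'}$ for every $\alpha'<\alpha$, so $J\phi_t^{-1}\in C^{\alpha'}_{\loc}$, and choosing $\alpha'>1-\theta$ (possible exactly when $\alpha+\theta>1$) gives $\rho_\phi\in C^{1-\theta}_{\loc}$. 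Now the commutator is paired against a $C^{1-\theta}$ function, and Corollary~\ref{ci4}(i) (Lemma~\ref{dim12}(i)) closes the estimate using only $b\in W^{\theta,1}_{\loc}$. In other words, the $C^\alpha$ hypothesis on $b$ is used \emph{indirectly}, to manufacture regularity of the flow's Jacobian which then plays the role you wanted the $u$-increment to play. Your route --- renormalize first, compose with the flow afterwards --- throws this regularity away at exactly the step where it is needed.
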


  \begin{remark} Recall that
 $b \in L^1 (0,T; W_{\loc}^{\theta, 1} (\RR^d) )$ if  $\int_0^T \| b (s, \cdot)\|_{W_{}^{\theta, 1} (\mathcal{D})} ds < \infty$,
 for any smooth bounded domain $\mathcal{D} \subset \mathbb{R}^d$.
  Since  $C^{\alpha} (\mathcal{D}) \subset W_{}^{\theta, 1} (\mathcal{D}) $, for any $\theta < \alpha$, we deduce that Hypothesis \ref{hy1}  implies  \eqref{d4} when $\alpha > 1/2$; in particular Theorem \ref{thm:aux-2}  follows from  Theorem \ref{fgpp} but only when $\alpha >1/2$.
  \end{remark}

 The proofs of both theorems follow ideas of \cite[Section 5]{FGP},
 using the results below on the commutator and on the regularity of
the Jacobian of the flow.
%Since these results are complementary the
%details of the proofs are left to the reader.
 The following
commutator estimates follows from \cite[Lemma 22]{FGP}.

\begin{corollary} \label{ci4} Assume
  $ v    \in L_{loc}^{\infty}\left(
\mathbb{R}^{d},\mathbb{R}^{d}\right) $, ${\mathrm{div}\,}v\in
L_{loc}^{1}\left(  \mathbb{R}^{d}\right)$, $g    \in
L_{loc}^{\infty}\left(  \mathbb{R}^{d}\right)$ and $\rho \in
C_r^{\infty} (  \mathbb{R}^{d})$.
\begin{itemize}
\item[(i)] If
there exists $\theta  \in (0,1)$ such that $
  v \in W_{\loc}^{\theta, 1}(
 \mathbb{R}^{d}, \mathbb{R}^{d}),
 %\;\;\; \, \rho \in
% C_{\loc}^{\alpha'}( \mathbb{R}^{d}),
$
  then %we have the  uniform bound
$$
\left|\int_{{\mathbb R}^d}\mathcal{R}_{\varepsilon}\left[
g,v\right] (  x) \rho(x) dx \right| \le
 C_r \| g\|_{L^{\infty}_{r+1}} \big( \|
\rho\|_{L^{\infty}_{r}}
 \| {\mathrm{div}\,} v
\|_{_{L_{r+1}^{1}}} +  \, [ \rho ]_{C^{1 - \theta}_{r}} \,
 [v]_{W^{\theta,1}_{r+1}} \big).
$$
\item[(ii)] If  there exists $\alpha  \in (0,1)$ such that
$
   v \in C_{\loc}^{\alpha}( \mathbb{R}^{d}, \mathbb{R}^{d}),
$
 then %we have the  uniform bound
$$
\left|\int_{{\mathbb R}^d}\mathcal{R}_{\varepsilon}\left[
g,v\right] (  x) \rho(x) dx \right| \le
 C_r \| g\|_{L^{\infty}_{r+1}} \big( \|
\rho\|_{L^{\infty}_{r}}
 \| {\mathrm{div}\,} v
\|_{_{L_{r+1}^{1}}} +  \, [ v ]_{C^{\alpha}_{r+1}} \,
 [\rho]_{W^{1- \alpha,1}_r} \big).
$$
\end{itemize}
\end{corollary}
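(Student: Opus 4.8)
The plan is to reduce the whole statement to a single symmetric ``double increment'' integral and then read off the two bounds by distributing the available regularity between $v$ and $\rho$. Writing $g_{\varepsilon}=\vartheta_{\varepsilon}\ast g$ and integrating by parts (legitimately, since $g\in L^{\infty}_{loc}$ and $\operatorname{div}v\in L^{1}_{loc}$, so all manipulations are distributional), the commutator unfolds, up to an overall sign, as
\[
\mathcal{R}_{\varepsilon}[g,v](x)=\int_{\mathbb{R}^{d}}(\nabla\vartheta_{\varepsilon})(x-y)\cdot\big(v(x)-v(y)\big)g(y)\,dy+\int_{\mathbb{R}^{d}}\vartheta_{\varepsilon}(x-y)\,\operatorname{div}v(y)\,g(y)\,dy .
\]
Pairing with $\rho$ and, in the first integral, inserting $g(y)\rho(x)=g(y)\rho(y)+g(y)\big(\rho(x)-\rho(y)\big)$, the piece carrying $\rho(y)$ collapses — because $\int(\nabla\vartheta_{\varepsilon})(x-y)\,dx=0$ — into a mollified divergence term $-\int g\rho\,(\operatorname{div}v)_{\varepsilon}$. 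Together with the explicit divergence integral, this yields a contribution bounded by $C_{r}\|g\|_{L^{\infty}_{r+1}}\|\rho\|_{L^{\infty}_{r}}\|\operatorname{div}v\|_{L^{1}_{r+1}}$, which is the first term on the right-hand side in both (i) and (ii). This reduction is precisely the content of Lemma 22 of \cite{FGP}, which I would invoke as a black box.

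What remains is the genuinely nontrivial term
\[
I_{2}=\int_{\mathbb{R}^{d}}\int_{\mathbb{R}^{d}}(\nabla\vartheta_{\varepsilon})(x-y)\cdot\big(v(x)-v(y)\big)\,g(y)\,\big(\rho(x)-\rho(y)\big)\,dy\,dx ,
\]
in which both $v$ and $\rho$ appear as increments. Here I would use only the crude pointwise bound $|(\nabla\vartheta_{\varepsilon})(x-y)|\le C\varepsilon^{-d-1}\mathbf{1}_{\{|x-y|\le 2\varepsilon\}}\le C|x-y|^{-d-1}\mathbf{1}_{\{|x-y|\le 2\varepsilon\}}$, the last inequality being valid exactly on the support. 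The single negative power of $|x-y|$ is then split between the two increments. For (ii), bounding $|v(x)-v(y)|\le[v]_{C^{\alpha}_{r+1}}|x-y|^{\alpha}$ turns the kernel into $C[v]_{C^{\alpha}_{r+1}}|x-y|^{-(d+(1-\alpha))}$, and the remaining factor $|\rho(x)-\rho(y)|\,|x-y|^{-(d+(1-\alpha))}$ integrates to the Gagliardo seminorm $[\rho]_{W^{1-\alpha,1}_{r}}$. For (i) the roles are reversed: estimating $|\rho(x)-\rho(y)|\le[\rho]_{C^{1-\theta}_{r}}|x-y|^{1-\theta}$ produces the kernel $C|x-y|^{-(d+\theta)}$, against which $|v(x)-v(y)|$ integrates to $[v]_{W^{\theta,1}_{r+1}}$. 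Since $g$ enters only through $\|g\|_{L^{\infty}_{r+1}}$, this gives both claimed estimates; the cut-off $\mathbf{1}_{\{|x-y|\le 2\varepsilon\}}$ only shrinks the domain, so the bounds are uniform in $\varepsilon$.

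The main obstacle is not the final splitting — that is a one-line exponent count once the double-increment form is in hand — but securing that form rigorously at the available regularity. The integrations by parts and the cancellation $\int\nabla\vartheta_{\varepsilon}=0$ must be justified for merely $L^{\infty}_{loc}$ functions $g,v$ with $\operatorname{div}v\in L^{1}_{loc}$, and one must track supports carefully to obtain the precise radii: $\rho$ lives in $B(r)$, convolution with $\vartheta_{\varepsilon}$ (supported in $B(2)$, $\varepsilon\le 1$) spreads everything into $B(r+1)$, which is why $g$, $\operatorname{div}v$ and $[v]$ are measured on $B(r+1)$ while $\rho$ is measured on $B(r)$. A minor point to handle with care is the Gagliardo seminorm near $\partial B(r)$, where one of $x,y$ may leave the support of $\rho$; this is absorbed harmlessly into the constant $C_{r}$. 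Granting the double-increment representation from Lemma 22 of \cite{FGP}, the corollary then follows directly from the two exponent splittings above.
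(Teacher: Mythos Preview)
Your proposal is correct and follows essentially the same route as the paper: both invoke Lemma~22 of \cite{FGP} to reduce to the double-increment integral $\iint g(y)\,\nabla\vartheta_{\varepsilon}(x-y)\cdot(v(x)-v(y))(\rho(x)-\rho(y))\,dx\,dy$ plus a divergence term, and then split the singularity of the kernel between the two increments exactly as you describe. The paper's write-up keeps explicit track of the $\varepsilon$-powers (using $|x-y|^{\theta+d}\le C\varepsilon^{\theta+d}$ on the support) rather than your shortcut $\varepsilon^{-d-1}\le C|x-y|^{-d-1}$, but the two computations are equivalent.
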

 \begin{proof} We have
$$
  \left|
  \iint g(x') D_x \vartheta_{\eps} ({x-x'}) \,
\big( \rho(x) - \rho(x') \big)\, [v(x)-v(x')] \, dx dx' \right|
$$
$$
\le \frac{\eps^{1 - \theta}}{\eps} \, [ \rho ]_{C^{1 - \theta}_{r}}
\,
 \| g \|_{L^{\infty}_{r+1}}\,
  \frac{1}{\eps^{d}}\,
   \iint_{B(r+1)^2} |D_x \vartheta (\frac{x-x'}{\eps})|  \,
  \frac{|v(x)-v(x')|}{|x-x'|^{\theta + d}} \,
  |x-x'|^{ \theta + d}  dx dx'
$$
$$
 \le
  [ \rho ]_{C^{1 - \theta}_{r}}
\,
 \| g \|_{L^{\infty}_{r+1}}
  \| D \theta \|_{\infty}\, [v]_{W^{\theta,1}_{r+1}}
$$
The second statement has a similar proof.
\end{proof}

The previous result can be extended to the case in which commutators
are composed with  a flow.

 \begin{lemma}
 \label{dim12} Let $\phi$ be a $C^{1}$-diffeomorphism of
 $\mathbb{R}^{d}$ ($J \phi$ denotes its Jacobian). Assume
  $ v    \in L_{loc}^{\infty}\left(
\mathbb{R}^{d},\mathbb{R}^{d}\right) $, ${\mathrm{div}\,}v\in
L_{loc}^{1}\left(  \mathbb{R}^{d}\right)$, $g    \in
L_{loc}^{\infty}\left(  \mathbb{R}^{d}\right)$.

Then, for any $\rho \in C_r^{\infty}(  \mathbb{R}^{d})$ and any
$R>0$ such that $\mathrm{supp}( \rho\circ \phi^{-1}) \subseteq
B(R)$,
 we have a uniform bound of
 $\int\mathcal{R}_{\varepsilon}\left[
g,v\right] \left(  \phi\left(  x\right)  \right)  \rho\left(
x\right)  dx $ under one of the following conditions:
\begin{itemize}
\item[(i)] there exists $\theta   \in (0,1)$ such
that $
  v \in W_{\loc}^{\theta, 1}(
 \mathbb{R}^{d}, \mathbb{R}^{d})$, $ J\phi \in
C_{\loc}^{1 - \theta}( \mathbb{R}^{d}) $;

\item[(ii)] there exists $\alpha  \in (0,1)$ such that
$
  J \phi \in W_{\loc}^{1 - \alpha, 1}(
 \mathbb{R}^{d} ) $, $ v \in
C_{\loc}^{\alpha}( \mathbb{R}^{d}, \mathbb{R}^{d}) $.
\end{itemize}
Moreover, under one of the previous conditions, we also have
\[
\lim_{\varepsilon\rightarrow0}\int\mathcal{R}_{\varepsilon}\left[
g,v\right] \left(  \phi\left(  x\right)  \right)  \rho\left(
x\right)  dx=0.
\]
\end{lemma}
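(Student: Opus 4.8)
The idea is to strip off the flow by a change of variables and reduce the statement to Corollary \ref{ci4}. Substituting $y=\phi(x)$ and writing $J\phi^{-1}$ for the Jacobian of $\phi^{-1}$, I would first record the identity
\[
\int \mathcal{R}_{\varepsilon}[g,v](\phi(x))\,\rho(x)\,dx
= \int \mathcal{R}_{\varepsilon}[g,v](y)\,\tilde\rho(y)\,dy,
\qquad
\tilde\rho := (\rho\circ\phi^{-1})\,J\phi^{-1},
\]
so that the composed commutator is nothing but the ordinary commutator tested against the new weight $\tilde\rho$, whose support lies in $B(R)$ by hypothesis. Everything then hinges on showing that $\tilde\rho$ belongs to exactly the class for which Corollary \ref{ci4} applies: $C^{1-\theta}$ with compact support in case (i), and $W^{1-\alpha,1}$ with compact support in case (ii).

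Establishing this regularity of $\tilde\rho$ is where the Jacobian hypotheses on $\phi$ enter, and it is the main technical obstacle, especially in case (ii). Since $\phi$ is a $C^{1}$-diffeomorphism, $\phi^{-1}$ is $C^{1}$ and locally bi-Lipschitz, $\rho\circ\phi^{-1}\in C^{1}$ has compact support, and $J\phi$ is continuous and nowhere vanishing, so $J\phi^{-1}=1/(J\phi\circ\phi^{-1})$ is bounded away from $0$ on the relevant compact set. In case (i) the assumption $J\phi\in C^{1-\theta}_{\loc}$ gives $J\phi\circ\phi^{-1}\in C^{1-\theta}_{\loc}$, hence $J\phi^{-1}\in C^{1-\theta}_{\loc}$ after taking reciprocals, and multiplying by $\rho\circ\phi^{-1}\in C^{1}$ yields $\tilde\rho\in C^{1-\theta}$ with compact support. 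In case (ii) the same scheme must be run in the fractional Sobolev scale: composition with the bi-Lipschitz map $\phi^{-1}$ preserves $W^{1-\alpha,1}_{\loc}$ (change variables in the Gagliardo double integral, using bi-Lipschitzness to control $|x-y|$ and the bounded Jacobians), the reciprocal of a bounded $W^{1-\alpha,1}$ function bounded away from zero is again $W^{1-\alpha,1}$ (Lipschitz functions act on such Sobolev functions), and the product inequality $[fg]\le\|f\|_{\infty}[g]+\|g\|_{\infty}[f]$ closes the estimate, giving $\tilde\rho\in W^{1-\alpha,1}$ with compact support.

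With $\tilde\rho$ so placed, the uniform bound in $\varepsilon$ is immediate from Corollary \ref{ci4}: I would first apply it to a genuinely smooth test function and then pass to $\tilde\rho$ by mollification, noting that the corollary bound depends only on the $L^{\infty}$ norm and on the $C^{1-\theta}$, resp.\ $W^{1-\alpha,1}$, seminorm of the test function, both of which are controlled along a standard mollification, while for each fixed $\varepsilon$ the function $\mathcal{R}_{\varepsilon}[g,v]$ is a fixed element of $L^{1}_{\loc}$.

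Only the vanishing as $\varepsilon\to0$ requires a further idea, since Corollary \ref{ci4} alone yields a bound of order one. Here I would exploit the split structure $[\,v\,]\cdot[\,\tilde\rho\,]$ of the estimate together with the linearity of the commutator. In case (i) pick smooth $v_{n}\to v$ in $W^{\theta,1}_{\loc}$ with $\operatorname{div}v_{n}\to\operatorname{div}v$ in $L^{1}_{\loc}$ and split $\mathcal{R}_{\varepsilon}[g,v]=\mathcal{R}_{\varepsilon}[g,v_{n}]+\mathcal{R}_{\varepsilon}[g,v-v_{n}]$: the remainder is bounded uniformly in $\varepsilon$ by Corollary \ref{ci4}(i) applied to $\tilde\rho$ and $v-v_{n}$, hence small for large $n$, whereas for fixed smooth $v_{n}$ the extra Lipschitz bound $|v_{n}(x)-v_{n}(x')|\lesssim|x-x'|$ on the support of $D\vartheta((x-x')/\varepsilon)$ contributes $|x-x'|$ in place of the Sobolev modulus and gains a factor $\varepsilon^{1-\theta}$, so $\int\mathcal{R}_{\varepsilon}[g,v_{n}]\tilde\rho\to0$. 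In case (ii) I would perform the symmetric splitting on the test function, approximating $\tilde\rho$ by smooth $\tilde\rho_{m}\to\tilde\rho$ in $W^{1-\alpha,1}$: the remainder is uniformly small by Corollary \ref{ci4}(ii), while for fixed smooth $\tilde\rho_{m}$ the Lipschitz cancellation gains a factor $\varepsilon^{\alpha}$. Sending first $\varepsilon\to0$ and then $n$ (resp.\ $m$) $\to\infty$ yields $\limsup_{\varepsilon\to0}\big|\int\mathcal{R}_{\varepsilon}[g,v](\phi(\cdot))\rho\big|=0$, which is the assertion.
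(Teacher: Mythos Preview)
Your approach is the same as the paper's: change variables to replace $\rho$ by $\tilde\rho=(\rho\circ\phi^{-1})\,J\phi^{-1}$, verify that $\tilde\rho$ has the regularity ($C^{1-\theta}$ in case (i), $W^{1-\alpha,1}$ in case (ii)) needed to invoke Corollary~\ref{ci4}, and read off the bound. The paper's proof stops there; it leaves implicit both the passage from smooth test functions (as literally required by Corollary~\ref{ci4}) to the merely H\"older or fractional-Sobolev $\tilde\rho$, and the argument for $\lim_{\varepsilon\to0}=0$. You supply both of these explicitly --- the first by mollifying $\tilde\rho$ and noting that the estimate depends only on the relevant seminorm, the second by the standard splitting $\mathcal{R}_\varepsilon[g,v]=\mathcal{R}_\varepsilon[g,v_n]+\mathcal{R}_\varepsilon[g,v-v_n]$ (or the symmetric splitting on $\tilde\rho$) --- so your write-up is in fact more complete than the paper's, but the underlying route is identical.
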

\begin{proof}
 By a change of variables $ \int\mathcal{R}_{\varepsilon}[
g,v] (  \phi(  x) )  \rho( x) dx=\int\mathcal{R}_{\varepsilon}[ g,v]
(  y)  \rho_{\phi}(  y) dx $ where the function
 $ \rho_{\phi}(  y)  =\rho(  \phi^{-1}( y) )
 J\phi^{-1}(  y)
 $
 has the support strictly contained in the ball
of radius $R$. Clearly, $
 \|  \rho_{\phi}\| _{L_{R}^{\infty}}\leq \| \rho\|
_{L_{r}^{\infty}}\| J\phi^{-1}\| _{L_{R}^{\infty}}. $ To prove the
result, we  have to check that Corollary~\ref{ci4} can be applied
with $\rho_{\phi}$ instead of $\rho$.

(i) To apply Corollary~\ref{ci4}~(i), we need to check that
 $ \rho_{\phi} \in C^{1 -\theta}_{loc}$. This follows since
$$
[\rho_{\phi} ]_{ C^{1 -\theta}_{R}} \le
 \| J\phi^{-1}\|
_{L_{R}^{\infty}} \, [\rho ({\phi}^{-1} (\cdot) ) ]_{ C^{1
-\theta}_{R}} \, + \, \| \rho \|_{L_{r}^{\infty}}
 [J\phi^{-1}]_{C^{1 -\theta}_{R}}
$$
$$
\le  \| D\phi^{-1}\| _{L_{R}^{\infty}} \,  \| D\rho\|
_{L_{r}^{\infty}} \, [ D\phi^{-1}]_{C^{1 -\theta}_{R}}
 \, + \, \| \rho \|_{L_{r}^{\infty}}
 [D\phi^{-1}]_{C^{1 -\theta}_{R}}.
$$
and the bound follows.

\medskip (ii) To apply Corollary~\ref{ci4}~(ii), we
  need to check that
 $ \rho_{\phi} \in W^{1- \alpha,1}_{loc}$: first
$$ [\rho_{\phi} ]_{W^{1 - \alpha, 1}_R }
\le  \| J\phi^{-1}\| _{L_{R}^{\infty}} [\rho \circ \phi^{-1}]_{W^{1-
\alpha,1}_R}
 \, + \,
 [J \phi^{-1} ]_{W^{1- \alpha,1}_R} \, \| \rho\|
_{L_{r}^{\infty}}
$$
and since
$$
[\rho \circ \phi^{-1}]_{W^{1- \alpha,1}_R} \le \|D (\rho \circ
\phi^{-1})\|_{L^1_R} \le \|D\rho\|_{L^1_r} \|D
\phi^{-1}\|_{L^\infty_R}
$$
we find
$$
[\rho_{\phi} ]_{W^{1 - \alpha, 1}_R } \, \le \,  C_R
\|D\rho\|_{L^1_r} \|D \phi^{-1}\|_{L^\infty_R} \| J\phi^{-1}\|
_{L_{R}^{\infty}} + [J \phi^{-1} ]_{W^{1- \alpha,1}_R} \, \| \rho\|
_{L_{r}^{\infty}}
$$
and the bound follows.
\end{proof}

Finally the next theorem extends the analysis of the Jacobian of the
flow presented in Section 2 and links the regularity condition on
$J\phi$ required in  Lemma~\ref{dim12} (ii) to the assumption on the
divergence of $b$ stated in Theorem~\ref{thm:aux-1}.

\begin{theorem}
\label{iac11} Let $d\ge 2$. Assume  Hypothesis~\ref{hy1}  and  the
existence of  $p\in (\frac{2d}{ d + 2 \alpha},2]$ and $q>2$ such
that $
 {\mathrm{div}\,}b\in L^{q} (0,T;L^{p}(\mathbb{R}^{d})) .
$ Then,
 for any
 $r>0$,
 $
  J\phi \in
 L^{p}( 0,T;W_{r}^{1 - \alpha, \, p} )$, $P$-a.s.
\end{theorem}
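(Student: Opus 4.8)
The plan is to start from the exponential representation of the Jacobian and reduce the claimed fractional regularity to a space--time moment estimate for an occupation-type functional of the flow. Since the noise in (\ref{SDE}) is additive, for a smooth regularization $b^{\varepsilon}$ the Jacobian of the associated flow solves $dJ\phi^{\varepsilon}_t(x)=\mathrm{div}\,b^{\varepsilon}(t,\phi^{\varepsilon}_t(x))\,J\phi^{\varepsilon}_t(x)\,dt$, whence $J\phi^{\varepsilon}_t(x)=\exp A^{\varepsilon}_t(x)$ with $A^{\varepsilon}_t(x)=\int_0^t\mathrm{div}\,b^{\varepsilon}(s,\phi^{\varepsilon}_s(x))\,ds$. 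First I would pass to the limit $\varepsilon\to0$: using $\mathrm{div}\,b^{\varepsilon}\to\mathrm{div}\,b$ in $L^{q}(0,T;L^{p})$ together with the convergence of the flows in Theorem~\ref{th:flow1} and the change of variables $z=\phi^{\varepsilon}_s(x)$ (which turns the composition into an honest $L^{p}$ integral against the Jacobian), one identifies the limit and obtains, $P$-a.s., the representation $J\phi_t(x)=\exp A_t(x)$ with $A_t(x)=\int_0^t\mathrm{div}\,b(s,\phi_s(x))\,ds$.

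Next I would reduce the regularity of $J\phi_t$ to that of $A_t$. Because $\phi_t$ is $P$-a.s.\ a $C^{1+\alpha'}$-diffeomorphism, on each ball $B(r)$ and for $t\in[0,T]$ the Jacobian $J\phi_t$ is bounded above and below by positive random constants, so $A_t=\log J\phi_t$ is bounded on $B(r)\times[0,T]$. The $L^{p}$ part of the target norm is then immediate, and, composing with a function that equals $\exp$ on this bounded range and is globally Lipschitz, one gets the pathwise bound $[J\phi_t]_{W^{1-\alpha,p}_r}\le C_\omega\,[A_t]_{W^{1-\alpha,p}_r}$ (fractional seminorms are stable under composition with Lipschitz maps, with no smallness lost). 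Hence it suffices to show $A\in L^{p}(0,T;W^{1-\alpha,p}_r)$ $P$-a.s., for which it is enough to prove $E\int_0^T[A_t]^{p}_{W^{1-\alpha,p}_r}\,dt<\infty$, i.e.\ by Tonelli
\[
\int_0^T\!dt\iint_{B(r)^2}\frac{E\,|A_t(x)-A_t(y)|^{p}}{|x-y|^{(1-\alpha)p+d}}\,dx\,dy<\infty .
\]
Since $p\le2$ and $P$ is a probability measure, Jensen gives $E|A_t(x)-A_t(y)|^{p}\le\big(E|A_t(x)-A_t(y)|^{2}\big)^{p/2}$, so everything rests on a good two-point bound for $E|A_t(x)-A_t(y)|^{2}$.

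For that bound I would write $A_t(x)-A_t(y)=\int_0^t[g_s(\phi_s(x))-g_s(\phi_s(y))]\,ds$ with $g_s=\mathrm{div}\,b(s,\cdot)$, expand the square, and estimate the double time integral $\int_0^t\!\int_0^t E[\,\cdot\,]\,ds\,ds'$ by conditioning at the earlier time and using the Markov property of the flow. Two ingredients drive it: (a) boundedness of $b$ gives, via Girsanov and parabolic (Schauder/Aronson) estimates for the $C^{\alpha}$-drift generator, Gaussian upper bounds for the transition densities $\rho_{s,s'}(\xi,z)$ together with gradient bounds $|\nabla_\xi\rho_{s,s'}(\xi,z)|\lesssim(s'-s)^{-(d+1)/2}e^{-c|z-\xi|^{2}/(s'-s)}$; (b) the $C^{1+\alpha'}$-flow estimates \eqref{bound} give $|\phi_s(x)-\phi_s(y)|\le\|D\phi_s\|_{L^\infty(B(r))}|x-y|$ with all moments finite uniformly in $s$. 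Pairing the Gaussian factors with $\|g_s\|_{L^{p}}$ (Hölder in space) and using $q>2$ (Hölder in time) reduces $E|A_t(x)-A_t(y)|^{2}$ to convergent singular space--time integrals, producing a power $|x-y|^{\kappa}$ whose exponent and time-integrability are dictated precisely by $q>2$ and $p>\frac{2d}{d+2\alpha}$, tuned so that $\kappa p/2>(1-\alpha)p$, i.e.\ the weighted integral above converges and the regularity $1-\alpha$ is attained.

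The mollification/limit passage and the reduction through $\exp$ are routine. \textbf{The main obstacle is the two-point moment estimate} $E|A_t(x)-A_t(y)|^{2}\lesssim|x-y|^{\kappa}$: one must track the interplay between the degeneracy of the joint law of $(\phi_s(x),\phi_s(y))$ --- both driven by the same Brownian motion, so their difference stays essentially frozen at scale $|x-y|$ while only the common position is Gaussian-smeared --- the $s^{-d/(2p)}$-type singularities coming from the spatial Hölder step, and the $L^{q}_t$-integrability, so that the space--time integrals converge and yield \emph{exactly} the exponent $1-\alpha$ under $q>2$ and $p>\frac{2d}{d+2\alpha}$. This is where the hypotheses are used in full, and where the bookkeeping must be done carefully: the crude estimates (bounding one of the two time-factors by absolute values) would only give a single power of $|x-y|$ and hence only cover the range $\alpha>1/2$.
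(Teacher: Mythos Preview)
Your route is genuinely different from the paper's, and the difference matters. The paper does \emph{not} try to estimate the two--point moment $E|A_t(x)-A_t(y)|^2$ directly. Instead it introduces the auxiliary backward parabolic problem
\[
\partial_t F^{\varepsilon}+\tfrac12\Delta F^{\varepsilon}+DF^{\varepsilon}\!\cdot b^{\varepsilon}=\mathrm{div}\,b^{\varepsilon},\qquad F^{\varepsilon}(T,\cdot)=0,
\]
uses Krylov's $L^{q}(0,T;L^{p})$ maximal regularity to get $F^{\varepsilon}\in L^{q}(0,T;W^{2,p})$ and $\sup_t\|F^{\varepsilon}(t,\cdot)\|_{W^{1,p}}$ bounded (here $q>2$ enters), and then applies It\^o's formula to obtain
\[
\psi_\varepsilon(t,x)=\int_0^t\mathrm{div}\,b^{\varepsilon}(s,\phi^{\varepsilon}_s(x))\,ds
= F^{\varepsilon}(t,\phi^{\varepsilon}_t(x))-F^{\varepsilon}(0,x)-\int_0^t DF^{\varepsilon}(s,\phi^{\varepsilon}_s(x))\,dW_s .
\]
The Lebesgue time integral has been traded for a stochastic integral plus boundary terms. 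Now the It\^o isometry turns the squared $W^{1-\alpha,2}_r$-seminorm of the stochastic integral into $\int_0^t E\,[DF^{\varepsilon}(s,\phi^{\varepsilon}_s(\cdot))]^2_{W^{1-\alpha,2}_r}\,ds$, and since $DF^{\varepsilon}\in W^{1,p}$ and $\phi^{\varepsilon}_s$ is a $C^1$-diffeomorphism with moment bounds on $D\phi^{\varepsilon}_s$, the composition lies in $W^{1,p_1}_r$; the Sobolev embedding $W^{1,p_1}_r\hookrightarrow W^{1-\alpha,2}_r$ is precisely the place where $p>\frac{2d}{d+2\alpha}$ is used. One then passes to the limit in $\varepsilon$ by weak compactness in $L^{p}(\Omega\times(0,T);W^{1-\alpha,p}_r)$.

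Your sketch has a real gap at the step you yourself flag. Writing $E|A_t(x)-A_t(y)|^2$ as a double time integral and conditioning at the earlier time gives, for the conditional factor, $h_{s,s'}(\xi,\eta)=\int g_{s'}(z)\,[\rho_{s,s'}(\xi,z)-\rho_{s,s'}(\eta,z)]\,dz$, and the Aronson/gradient bound yields at best $|h_{s,s'}(\xi,\eta)|\lesssim |\xi-\eta|^{\gamma}(s'-s)^{-\gamma/2-d/(2p)}\|g_{s'}\|_{L^p}$ with $\gamma<2-\tfrac{d}{p}$. But the remaining factor $g_s(\phi_s(x))-g_s(\phi_s(y))$ gives no additional power of $|x-y|$: the pair $(\phi_s(x),\phi_s(y))$ is a \emph{degenerate} diffusion (its law is concentrated on a $d$-dimensional subset of $\RR^{2d}$ because both coordinates share the same noise), so there is no two--point transition density to smooth this factor, and $g_s\in L^p$ alone offers nothing pointwise. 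The resulting exponent $\gamma$ only meets the required threshold $2(1-\alpha)$ when $p>\tfrac{d}{2\alpha}$, which is strictly stronger than the stated hypothesis $p>\tfrac{2d}{d+2\alpha}$ (for $d\ge2$, $\alpha<1$). The paper's It\^o--formula trick is exactly what circumvents this: it replaces the time integral of $g_s$ by a martingale whose integrand has one extra derivative, so that the It\^o isometry (rather than a brute-force double time integral) controls the second moment. Your phrase ``careful bookkeeping'' does not supply this missing derivative; without an analogue of the PDE/Zvonkin step, the direct heat--kernel route does not reach the full range of $p$.

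A secondary issue: your reduction $[J\phi_t]_{W^{1-\alpha,p}_r}\le C_\omega[A_t]_{W^{1-\alpha,p}_r}$ has a random Lipschitz constant $C_\omega$ (depending on $\sup_{t,x\in B(r)}|A_t(x)|$), so finiteness of $E\!\int_0^T[A_t]^p_{W^{1-\alpha,p}_r}\,dt$ does not immediately give the corresponding bound for $J\phi$. The paper avoids this by working throughout with the mollified $\psi_\varepsilon$ (where $\exp$ is harmless since everything is smooth) and passing to the limit only at the end via weak compactness.
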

\begin{proof}  In the sequel we assume $\sigma =1$
 to simplify notation.

 The first part of the proof is similar to
 the one of \cite[Theorem 11]{FGP}.
  Indeed Step~1 can be carried on thanks to the  chain rule for fractional Sobolev
 spaces:
 if $f:\mathbb{R}^{d}\rightarrow \mathbb{%
R}$ is a continuous function, of class $W_{loc}^{1-\alpha,p}(\mathbb{R}^{d})$ and $%
g:\mathbb{R^d}\rightarrow \mathbb{R}$ is a $C^{\infty }$ function,
then $ g\circ f\in W_{loc}^{1-\alpha,p}(\mathbb{R}^{d})$  and
\begin{equation*}
 [ (g\circ f)]^p_{W_{r}^{1- \alpha, p}}
 \leq \left( \sup_{x\in B(r)}\left|
g^{\prime }(f(x))\right| \right) ^{p} [  f]^p_{W_{r}^{1- \alpha,
p}},
\end{equation*}
for every $r>0$. The modification of Step~2 does not pose any
problem, so we only consider the last steps of the proof.

\medskip \textbf{Step 3.} To prove the assertion it is enough
 to check that the family $\left(
\psi _{\varepsilon }\right) _{\varepsilon >0}$ is bounded in
 $L^{p}(\Omega \times (0,T);W_{r}^{1-\alpha,p})$.

Indeed, once we have proved this fact, we can extract from
 the previous
sequence
 $\psi _{\varepsilon _{n}}$ a subsequence which converges
  weakly in $L^{p}(\Omega
\times (0,T);W_{r}^{1-\alpha,p})$ to some $\gamma $. This in
particular implies that such subsequence  converges weakly in
$L^{p}(\Omega \times (0,T),L_{r}^{p})$ to  $\gamma $ so we must have
that $\gamma = J \phi$.

We introduce the following Cauchy problem, for $\eps \ge 0$,
\begin{equation}
\left\{ \begin{aligned} \frac{\partial F^{\eps}}{\partial t}+
\frac{1}{2} \Delta F^{\eps} + D F^{\eps} \cdot b^{\eps}
={\mathrm{div}\, }b^{\eps}, \;\;
\; t \in [0,T[ \\ F^{\eps}(T,x)=0,\;\;\; x \in \mathbb{R} ^d. \end{aligned}%
\right.   \label{equa1}
\end{equation}
  This  problem
 has a unique solution $F^{\eps}$ in the space
 $L^{q}(0,T;W^{2,p}(\mathbb{ R}^{d})$. Moreover,
there exists a positive constant $C=C(p,q,
 d,T,\Vert b\Vert _{\infty
})$ such that
\begin{equation}
\Vert F^{\eps}\Vert _{L^{q}(0,T;W^{2,p}(\mathbb{ R}^{d})) }\leq
C\Vert
 \mathrm{div \,}b \Vert _{L^{q}(0,T;L^{p}(\mathbb{
R}^{d}))},  \label{bond5}
\end{equation}
for any $\eps \ge 0$.  This result  can be proved by using
\cite[Theorem 1.2]{Kr1}  and repeating the
 argument  of the proof
 in \cite[Theorem 10.3]{Kry-Ro}. This argument  works without
difficulties in the present case in which $b$ (and so $b^{\eps}$) is
globally bounded and $\mathrm{div \,}b \in
L^{q}(0,T;L^{p}(\mathbb{%
R}^{d}))$ with  $p, q \in (1, + \infty)$.

From the previous result we can also deduce, since we are assuming
$q>2$, that  $F^{\eps} \in C( [ 0,T] ;W^{1,p}( \mathbb{R}^{d}) )$,
for any $\eps \ge 0$, and moreover there exists a positive constant
$C$ $=C(p,q$ $
 d,T, \Vert b \Vert _{\infty
})$ such that
\begin{equation} \label{df}
\sup_{t\in\left[ 0,T\right] }\left\|   F^{\eps}(  t,\cdot) \right\|
_{W^{1,p}\left( \mathbb{R}^{d}\right)  } \leq C \Vert
 \mathrm{div \,}b \Vert _{L^{q}(0,T;L^{p}(\mathbb{
R}^{d}))}.
\end{equation}
We only give a  sketch of proof of \eqref{df}.
 Define $u^{\eps} (t,x) = F^{\eps} (T-t, x)$; we
have the explicit formula
$$
 u^{\eps}(t,x) = \int_0^t P_{t-s}g^{\eps}(s, \cdot)(x) ds,
$$
where $(P_t) $ is the heat semigroup and $g^{\eps}(t,x)
 = D u^{\eps} (t,x) \cdot b^{\eps} (T-t,x)
 - {\mathrm{div}\, }b^{\eps} (T-t,x)$. We get, since $q>2$
  and $q' =\frac{q}{q-1}<2$,
\begin{equation*}
\begin{split}
\| D_x u^{\eps}(t, \cdot)\|_{L^p}  & \le  c\int_0^t
 \frac{1}{(t-s)^{1/2}} \| g^{\eps}(s, \cdot)\|_{L^p} ds
\\ & \le C
 \Big( \int_0^T
 \frac{1}{s^{q'/2}}  ds
  \Big)^{1/q'} \, \Big( \int_0^T
   \| \mathrm{div}\,  b (s, \cdot)\|_{L^p}^{q} ds \Big)^{1/q}
\end{split}
\end{equation*}
  and so \eqref{df} holds.
  Using It\^{o} formula we find (remark that
 $F^{\eps}(t, \cdot) \in C^{2}_b(\RR^d)$)
\begin{equation} \label{ci51}
F^{\varepsilon }\left( t,{\phi }_{t}^{\varepsilon }\left( x\right)
\right)
-F^{\varepsilon }\left( 0,x\right) -\int_{0}^{t}DF^{\varepsilon }\left( s,{%
\phi }_{s}^{\varepsilon }\left( x\right) \right) \cdot dW_{s}=
\int_{0}^{t}%
\mathrm{div}b^{\varepsilon }\left( s,{\phi }_{s}^{\varepsilon
}\left( x\right) \right) ds = \psi _{\varepsilon }(t,x ).
\end{equation}
Since we already know that $\left( \psi _{\varepsilon }\right)
_{\varepsilon >0}$ is bounded in $L^{p}(\Omega \times
(0,T),L_{r}^{p})$ and since $p\le 2$, to verify that $\left( \psi
_{\varepsilon }\right) _{\varepsilon >0}$ is bounded in
$L^{p}(\Omega \times (0,T);W_{r}^{1-\alpha,p})$, it is enough to
prove that $ E \int_0^T [\psi _{\varepsilon }(t,
\cdot)]_{W_{r}^{1-\alpha,2}}^2 dt \le C, $ for any $\eps >0$. We
give details only for the most difficult term
$\int_{0}^{t}D^{}F^{\varepsilon }(s,{\phi }
 _{s}^{\eps}(x))  dW_{s}$ in \eqref{ci51}. The $F(0,x)$ term can be controlled using~\eqref{df} and the others are of easier estimation.
% The other terms are easier to estimate.
 We show  that  there exists  a constant $C>0$ (independent on  $\eps$)
 such that
\begin{equation}
E \int_{0}^{T} dt \left [ \int_{0}^{t}D^{}F^{\varepsilon }\left(
s,{\phi }_{s}^{\eps}\left( \cdot\right) \right) dW_{s}\right
]_{W_{r}^{1-\alpha,2}}^{2} \, \leq C \label{cia2}
\end{equation}
 We have
$$
E\left[
\int_{0}^{T}dt\int_{B(r)}\int_{B(r)}\frac{|\int_{0}^{t}(DF^{\varepsilon
}\left( s,{\phi}_s^{\varepsilon} \left( x\right) \right)
-DF^{\varepsilon }\left( s, {\phi}_s^{\varepsilon}\left( x^{\prime
}\right) \right) )dW_{s}|^{2}}{|x-x^{\prime }|^{(1 - \alpha)2 + d
}}dx\,dx^{\prime }\right]
$$
$$
= \int_{0}^{T}    \int_{B(r)}\int_{B(r)} E
\int_{0}^{t}\frac{|DF^{\varepsilon }\left(
s,{\phi}_s^{\varepsilon}\left( x\right) \right) -DF^{\varepsilon
}\left( s,{\phi}_s^{\varepsilon}\left( x^{\prime }\right) \right)
|^{2}}{|x-x^{\prime }|^{(1 - \alpha)2 + d } }ds \,  dx\,dx^{\prime }
,
$$
\begin{equation*}
=  E \int_{0}^{T}dt  \int_{0}^{t}ds
\int_{B(r)}\int_{B(r)}\frac{|DF^{\varepsilon }\left(
s,{\phi}_s^{\varepsilon}\left( x\right) \right) -DF^{\varepsilon
}\left( s,{\phi}_s^{\varepsilon}\left( x^{\prime }\right) \right)
|^{2}}{|x-x^{\prime }|^{(1 - \alpha)2 + d } }dx\,dx^{\prime }
\end{equation*}
\begin{equation*}
\leq TE\left[ \int_{0}^{T}ds\int_{B(r)}\int_{B(r)}\frac{|DF^{\varepsilon }\left( s,%
{\phi}_s^{\varepsilon}\left( x\right) \right) -DF^{\varepsilon
}\left( s, {\phi}^{\varepsilon}_{s}\left( x^{\prime }\right) \right)
|^{2}}{|x-x^{\prime }|^{(1 - \alpha)2 + d} }dx\,dx^{\prime }\right]
,
\end{equation*}
\begin{equation*}
\leq TE\int_{0}^{T} [ DF^{\varepsilon
}(s,{\phi}^{\varepsilon}_{s}(\cdot ))]_{W_{r}^{1 - \alpha, 2}}^{2}
\, ds
\end{equation*}
By the Sobolev embedding the $W^{1 - \alpha,2}_r$-seminorm  can be
controlled by the norm in $W^{1,p}_r$ if
$$
 1 - \frac{d}{p} \ge (1-\alpha) - \frac{d}{2}.
$$
This holds if $p \ge \frac{2d}{ d + 2 \alpha}$.
 Then we consider $p_{1} $ such that
$ p>{p_{1}}> \frac{2d}{ d + 2 \alpha} $
 and show that
\begin{equation} \label{f7}
E\int_{0}^{T}\Vert DF^{\varepsilon
}(s,{\phi}^{\varepsilon}_{s}(\cdot ))\Vert _{W_{r}^{1,p_{1}}}^{2}ds
\le C < \infty,
\end{equation}
where $C$ is independent on $\eps$.

\medskip \noindent \textbf{Step 4.}
 To obtain \eqref{f7} we estimate
\begin{equation*}
E\int_{0}^{T}ds\Big (\int_{B(r)}|D^{2}F^{\varepsilon }\left( s,{\phi}^{\varepsilon}_{s}\left( x\right) \right) D{\phi}^{\varepsilon}_{s}\left( x\right) |^{p_{1}}dx\Big )^{%
\frac{2}{p_{1}}}
\end{equation*}
A similar term has been already estimated in the proof of Theorem 11
in \cite{FGP}. Since
\begin{equation*}
\int_{B(r)}\left( \int_{0}^{T}E\left[ \left|
D{\phi}^{\varepsilon}_{s}\left( x\right) \right| ^{r}\right]
ds\right) ^{\gamma }dx<\infty ,
\end{equation*}
for every $r,\gamma \geq 1$ (see (\ref{bound})), by the H\"{o}lder
inequality, it is sufficient to prove that
\begin{equation*}
\int_{0}^{T}E\left[ \left( \int_{B(r)}\left| D^{2}F^{\varepsilon
}\left( s,{\phi}^{\varepsilon}_{s}
\left( x\right) \right) \right| ^{p^{}}dx\right) ^{\frac{2}{%
p^{ }}}\right] dt \le C <\infty.
\end{equation*}
  We have
\begin{align*}
& \int_{0}^{T}E\left[ \left( \int_{B(r)}\left| D^{2}F^{\varepsilon }
\left( s,
{\phi}^{\varepsilon}_{s}\left( x\right) \right) \right|^{p^{}}dx\right) ^{\frac{2}{%
p^{ }}}\right] dt \\
& =E\left[ \int_{0}^{T}ds\left(
\int_{{\phi}^{\varepsilon}_{s}(B(r))}\left| D^{2}F^{\varepsilon
}\left( s,{y}\right) \right|
^{p^{}}J({\phi}^{\varepsilon}_{s})^{-1}(y)dy\right) ^{\frac{2}{p^{ }}}\right]  \\
& \leq \sup_{s \in [0,T] , \, y \in \RR^d}
   E[ J (\phi_s^{\eps})^{-1}\, (y)]^{2/p}
\int_{0}^{T}\Big (\int_{\mathbb{R}^d}\left| D^{2}F^{\varepsilon
}\left( s,{y}\right) \right| ^{p}dy\Big )^{\frac{2}{p}}\, \le C <
\infty,
\end{align*}
where, using  the results of \cite[Section 3]{FGP} and the bound
\eqref{bond5}, $C$ is independent on $\eps>0.$ The proof  is
complete.
\end{proof}

%%%%%%%%%%%%%%%%%%%%%%%%%%%%%%%%%%%%%%%%%%%%%%%%%%%%%%%%%%%%%%%%%%%%%%%%%%%%%%%%%%%%%%%%%%
%%%%%%%%%%%%%%%%%%%%%%%%%%%%%%%%%%%%%%%%%%%%%%%%%%%%%%%%%%%%%%%%%%%%%%%%%%%%%%%%%%%%%%%%%%
%%%%%%%%%%%%%%%%%%%%%%%%%%%%%%%%%%%%%%%%%%%%%%%%%%%%%%%%%%%%%%%%%%%%%%%%%%%%%%%%%%%%%%%%%%
%%%%%%%%%%%%%%%%%%%%%%%%%%%%%%%%%%%%%%%%%%%%%%%%%%%%%%%%%%%%%%%%%%%%%%%%%%%%%%%%%%%%%%%%%%

\footnotesize

%%%%%%%%%%%%%%%%%%%%%%%%%%%%%%%%%%%%%%%%%%%%%%%%%%%%%%%%%%%%%%%%%%%%%%%%%%%%%%%%%%%%%%%%%%
%%%%%%%%%%%%%%%%%%%%%%%%%%%%%%%%%%%%%%%%%%%%%%%%%%%%%%%%%%%%%%%%%%%%%%%%%%%%%%%%%%%%%%%%%%

\end{document}